\newif\ifpreprint
\def\sectionfont{\sffamily\Large\bfseries\boldmath}
\def\subsectionfont{\sffamily\large\bfseries\boldmath}
\def\paragraphfont{\sffamily\normalsize\bfseries\boldmath}
\titleformat*{\section}{\sectionfont}
\titleformat*{\subsection}{\subsectionfont}
\titleformat*{\subsubsection}{\paragraphfont}
\titleformat*{\paragraph}{\paragraphfont}
\titleformat*{\subparagraph}{\paragraphfont}
\setlist{nolistsep}
\newcounter{algorithmctr}[section]
\renewcommand{\thealgorithmctr}{\thesection.\arabic{algorithmctr}}
\newtheoremstyle{exampstyle}
  {.5\baselineskip} 
  {\topsep} 
  {} 
  {} 
  {\bfseries} 
  {.} 
  {.5em} 
  {} 
\theoremstyle{exampstyle}
\newtheorem{lemma}{Lemma}[section]
\newtheorem{proposition}{Proposition}
\newtheorem{remark}{Remark}
\newtheorem{assumption}{Assumption}
\algrenewcommand\algorithmicindent{1em}%
\newcommand{\eg}{\emph{e.g.}\ }
\newcommand{\ie}{\emph{i.e.}\ }
\newcommand{\iid}{i.i.d.\ }
\newcommand{\mcf}{\mathcal}
\renewcommand{\Re}{{\mbox{\bf R}}}
\newcommand{\ReExt}{\tilde{\mbox{\bf R}}}
\newcommand{\Nat}{\bf N}
\newcommand{\eqdef}{\coloneqq}
\newcommand{\eps}{\varepsilon}
\newcommand{\Alg}{{Alg.}}
\newcommand{\App}{{Appendix}}
\newcommand{\Cor}{{Cor.}}
\newcommand{\Fig}{{Figure}}
\newcommand{\Prop}{{Prop.}}
\newcommand{\Thm}{{Thm.}}
\DeclareMathOperator*{\argmin}{\operatorname{argmin}}
\DeclareMathOperator{\prox}{prox}
\newcommand{\project}[1]{{\rm\Pi}_{#1}}
\newcommand{\dist}[1]{{\rm dist}_{#1}}
\newcommand{\normalCone}[1]{N_{#1}}
\newcommand{\polar}[1]{{#1}^\circ}
\newcommand{\abs}[1]{|#1|}
\newcommand{\norm}[1]{\lVert#1\rVert}
\newcommand{\normbig}[1]{\big\lVert#1\big\rVert}
\newcommand{\half}{\tfrac{1}{2}}
\newcommand{\innerprod}[2]{\left\langle{#1},{#2}\right\rangle}
\newcommand{\seq}[1]{\lbrace {#1} \rbrace_{k\in\Nat}}
\pgfplotsset{compat=newest} 
\pgfplotsset{%
    /pgfplots/ybar legend/.style={
    /pgfplots/legend image code/.code={%
       \draw[##1,/tikz/.cd,
       yshift=-0.3em
       ]
        (0cm,0cm) rectangle (5pt,0.8em);},
   }
}
\pgfplotsset{select coords between index/.style 2 args={
    x filter/.code={
        \ifnum\coordindex<#1\fi
        \ifnum\coordindex>#2\fi
    }
}}
\newacronym{ADMM}{ADMM}{alternating direction method of multipliers}
\newacronym{BPD}{BPD}{basis pursuit denoising}
\newacronym{DPDA}{DPDA}{distributed primal-dual algorithm}
\newacronym{QP}{QP}{quadratic program}
\newacronym{SOCP}{SOCP}{second-order cone program}
\newcommand{\OptProblem}[5][]{
\begin{aligned}\label{#1}
  \begin{array}{ll}
    \underset{#3}{\rm{#2}}&\hspace{-0ex}#4\vspace{.5ex}\\
    \rm{subject~to}&#5
  \end{array}
\end{aligned}
}
\newcommand{\MinProblem}[4][]
{\OptProblem[#1]{minimize}{#2}{#3}{#4}}
\title{%
	\ifpreprint
		\bfseries\sffamily
	\else
		\bfseries\LARGE
	\fi
	Decentralized Resource Allocation via Dual Consensus ADMM
}
\author{%
Goran Banjac,
Felix Rey,
Paul Goulart,
and John Lygeros%
\ifpreprint
\else
	\thanks{%
		G.\ Banjac, F.\ Rey, and J.\ Lygeros are with the Automatic Control Laboratory, ETH Zurich, Physikstrasse 3, 8092 Zurich, Switzerland.
		{\tt\footnotesize \{gbanjac, rey, lygeros\}@control.ee.ethz.ch}}%
	\thanks{%
		P.\ Goulart is with the Department of Engineering Science, University of Oxford, Oxford OX1~3PJ, UK.
		{\tt\footnotesize paul.goulart@eng.ox.ac.uk}}%
\fi
}
\begin{document}
\maketitle

\ifpreprint\else
	\thispagestyle{empty}
	\pagestyle{empty}
\fi

\begin{abstract}
	We consider a resource allocation problem over an undirected network of agents, where edges of the network define communication links.
	The goal is to minimize the sum of agent-specific convex objective functions, while the agents' decisions are coupled via a convex conic constraint.
	We derive two methods by applying the \gls{ADMM} for decentralized consensus optimization to the dual of our resource allocation problem.
	Both methods are fully parallelizable and decentralized in the sense that each agent exchanges information only with its neighbors in the network and requires only its own data for updating its decision.
	We prove convergence of the proposed methods and demonstrate their effectiveness with a numerical example.
\end{abstract}

\glsresetall

\section{Introduction}

Solving optimization problems in a distributed fashion has attracted increased attention in many research areas.
This is mainly motivated by the rapid growth in size and complexity of modern datasets, which makes them hard (or even impossible) to process on a single computational unit \cite{Boyd:2011}.
On the other hand, optimization problems arising in multi-agent systems usually have a separable structure making distributed optimization methods a natural choice for solving them \cite{Banjac:2018}.
Even if such problems were solvable in a centralized fashion, the agents would need to share their local data and objective functions with the central coordinator, which would then raise information privacy issues \cite{Deori:2016}.

Distributed optimization methods are based on an iterative procedure in which the agents perform local computations and share information with other agents through a communication protocol which is often defined on a connected graph (network) \cite{Xiao:2006}.
While in some methods the agents require global information about the graph, such as the overall number of nodes or the graph Laplacian \cite{Xiao:2004}, we will focus on those in which the agents do not require a central coordinator or any global information about the graph.

\subsection*{Problem description}

Let $\mcf{G}=(\mcf{N},\mcf{E})$ denote a graph of $N\in\Nat$ agents, where $\mcf{N}\eqdef \{ 1,\ldots,N \}$ is the set of nodes, and $\mcf{E}\subseteq\mcf{N}\times\mcf{N}$ is the set of edges.
Suppose that node $i\in\mcf{N}$ can send information to node $j\in\mcf{N}$ only if $(i,j)\in\mcf{E}$.

Consider the following resource allocation problem:
\begin{equation}
	\MinProblem[{eqn:main}]{x}{\displaystyle \sum_{i\in\mcf{N}} f_i(x_i)}{\displaystyle \sum_{i\in\mcf{N}} (A_i x_i - b_i) \in \mcf{K},}
	\tag{$\mcf{P}$}
\end{equation}
where $x_i\in\Re^{n_i}$, $x\in\Re^n$ is obtained by vertically concatenating vectors $x_i$ for all $i\in\mcf{N}$, and $n=\sum_{i\in\mcf{N}} n_i$.
Problems of this form arise in numerous research areas including network flow control \cite{Bertsekas:1998}, communication networks \cite{Shen:2012}, signal processing \cite{Chen:1998}, and economics \cite{Heal:1969}.

We are interested in solving \ref{eqn:main} in a parallel and decentralized fashion so that only neighbor-to-neighbor communications are allowed.
Each node $i\in\mcf{N}$ has access only to its local objective function $f_i:\Re^{n_i}\mapsto\ReExt$, as well as $A_i\in\Re^{m\times n_i}$, $b_i\in\Re^m$, and $\mcf{K}\subseteq\Re^m$.
We make the following assumptions throughout the paper:
\begin{assumption}\label{ass:problem}~
	\begin{enumerate}[label=(\roman*)]
		\item $f_i$ is convex, closed, and proper for all $i\in\mcf{N}$.
		\item $\mcf{K}$ is a nonempty, closed, and convex cone.
		\item A primal-dual solution exists and the duality gap is zero.
		\item $\mcf{G}$ is a connected undirected graph.
	\end{enumerate}
\end{assumption}

We make no additional assumptions on the problem such as differentiability of the objective functions, or full rank of the constraint matrices.
Note that we allow each agent to have individual convex constraints of the type $x_i \in \mcf{X}_i$, where $\mcf{X}_i \subseteq \Re^{n_i}$ is a nonempty, closed, and convex set, which can be incorporated in the objective by adding the indicator function~$\mcf{I}_{\mcf{X}_i}$ to $f_i$.
Also, observe that \ref{eqn:main} allows for multiple constraints with possibly different cones, which can be cast as a single constraint using the Cartesian product of the cones.
Since the graph is undirected, $(i,j)\in\mcf{E}$ implies $(j,i)\in\mcf{E}$.

\subsection*{Related work}

The \gls{ADMM} was shown to be very effective for solving large-scale optimization problems in a distributed fashion \cite{Boyd:2011}, and many variations of the algorithm have been proposed \cite{Bertsekas:1997,Wei:2012,Wei:2013,Chang:2016}.
The authors in \cite{Deng:2017} use a Jacobi-like \gls{ADMM} for solving a variant of \ref{eqn:main} in which the computations are decomposed into $N$ smaller subproblems.
The algorithm is centralized because each node in the graph shares its decision vector with a central coordinator which then broadcasts updated information back to the nodes.
However, the existence of such a central coordinator may be undesirable in some applications.

The authors in \cite{Chang:2015} use the dual consensus \gls{ADMM} for solving a subclass of \ref{eqn:main} in which $\mcf{K}=\{0\}$.
The algorithm is fully decentralized and each node updates its decision vector based only on its own data and neighbor communications, but can handle only coupling constraints described by linear equalities, which limits applicability of the method.
The authors in \cite{Aybat:2016,Aybat:2016:arxiv} propose the \gls{DPDA}, which is based on an algorithm studied in \cite{Chambolle:2016}.
The algorithm consists of simple iterations and converges under certain choices of algorithm parameters, which can be computed based on local information from each agent.

In this paper we propose two methods based on \gls{ADMM} which can be seen as extensions of \cite[\Alg~3]{Chang:2015} for solving \ref{eqn:main} with $\mcf{K}$ being a general nonempty, closed, and convex cone.
We prove convergence of the proposed methods and demonstrate via a numerical example that both methods outperform \gls{DPDA} in terms of the iteration complexity.

\subsection*{Notation}

Let $\Nat$ denote the set of natural numbers, $\Re$ the set of real numbers,
$\ReExt\eqdef\Re\cup\{+\infty\}$ the extended real line, and $\Re^n$ the $n$-dimensional real space equipped with an inner product $\innerprod{\cdot}{\cdot}$ and induced norm $\norm{\cdot}$.
We denote by $\Re^{m\times n}$ the set of real $m$-by-$n$ matrices.
The adjoint to a linear operator $A:\Re^n\mapsto\Re^m$ is defined as the unique operator $A^*:\Re^m\mapsto\Re^n$ that satisfies $\innerprod{Ax}{y}=\innerprod{x}{A^*y}$.
We denote by $(x_i)_{i\in\mcf{N}}$ the vector obtained by vertical concatenation of vectors $x_i$, and by $[A_i]_{i\in\mcf{N}}$ the matrix obtained by horizontal concatenation of matrices $A_i$ for all $i\in\mcf{N}$.

The \emph{conjugate} of a convex, closed, and proper function $f: \Re^n\mapsto\ReExt$ is given by $f^*(y) \eqdef \sup_x \left\lbrace \innerprod{y}{x} - f(x) \right\rbrace$,
\ifpreprint
	the \emph{subdifferential} of $f$ by $\partial f(x) \eqdef \lbrace u\in\Re^n \mid (\forall y\in\Re^n) \: \innerprod{y-x}{u} + f(x) \le f(y) \rbrace$,
\fi
and the \emph{proximal operator} of $f$ by $\prox_f^\rho(x) \eqdef \argmin_y\lbrace f(y) + \tfrac{\rho}{2}\norm{y-x}^2 \rbrace$ where $\rho>0$ is a parameter.

For a nonempty, closed, and convex set~$\mcf{C}\subseteq\Re^n$ we denote its \emph{indicator function} by $\mcf{I}_\mcf{C}$ (which takes value $0$ if its argument $x\in\Re^n$ belongs to $\mcf{C}$ and $+\infty$ otherwise), the \emph{distance} of~$x\in\Re^n$ to~$\mcf{C}$ by $\dist{\mcf{C}}(x) \eqdef \min_{y\in\mcf{C}} \norm{x-y}$,
\ifpreprint
	the \emph{projection} of~$x\in\Re^n$ onto~$\mcf{C}$ by $\project{\mcf{C}}(x) \eqdef \argmin_{y\in\mcf{C}} \norm{x-y}$, and the \emph{normal cone} of~$\mcf{C}$ at $x \in\mcf{C}$ by $\normalCone{\mcf{C}}(x) \eqdef \lbrace u\in\Re^n \mid \sup_{u\in\mcf{C}}\innerprod{u}{y-x}\le 0 \rbrace$.
	Note that~$\project{\mcf{C}}$ and $\normalCone{\mcf{C}}$ are the proximal operator and the subdifferential of~$\mcf{I}_\mcf{C}$, respectively.
\else
	and the \emph{projection} of~$x\in\Re^n$ onto~$\mcf{C}$ by $\project{\mcf{C}}(x) \eqdef \argmin_{y\in\mcf{C}} \norm{x-y}$.
	Note that~$\project{\mcf{C}}$ is the proximal operator of~$\mcf{I}_\mcf{C}$.
\fi
For a convex cone~$\mcf{K}\subseteq\Re^n$, we denote its \emph{polar cone} by $\polar{\mcf{K}} \eqdef \lbrace y\in\Re^n \mid \sup_{x\in\mcf{K}}\innerprod{x}{y}\le 0 \rbrace$.

For a graph $\mcf{G}=(\mcf{N},\mcf{E})$, let $\mcf{N}_i \eqdef \{ j \in \mcf{N} \mid (i,j) \in \mcf{E} \}$ denote the set of neighboring nodes of node $i\in\mcf{N}$, and $d_i\eqdef\abs{\mcf{N}_i}$ its degree.

\section{Dual consensus ADMM}

\gls{ADMM} is an operator splitting method that can be used to solve structured optimization problems \cite{Boyd:2011}.
Due to its relatively low per-iteration computational cost and ability to decompose an optimization problem into a sequence of smaller problems, the method is suitable for distributed and large-scale optimization \cite{Boyd:2011,Iutzeler:2016}.

The authors in \cite{Mateos:2010} propose two variants of \gls{ADMM} that can be used to solve the following consensus optimization problem over a connected undirected graph:
\begin{equation}\label{eqn:primal}
	\textrm{minimize} \quad \sum_{i\in\mcf{N}} \psi_i(y),
\end{equation}
where $\psi_i$ is a convex, closed, and proper function for all $i\in\mcf{N}$.
In order to update its decision, each node $i\in\mcf{N}$ shares its own decision vector with its neighbors and uses only its own objective function.
Both methods are referred to as \emph{consensus} \gls{ADMM} and are outlined in \Alg~\ref{alg:c-admm} and \Alg~\ref{alg:c-admm-sum} in \App~\ref{app:consensus-admm}.

The structure of our problem \ref{eqn:main} is not suitable for applying the consensus \gls{ADMM} directly since it cannot be cast in the form of problem \eqref{eqn:primal}.
However, as we will show in the sequel, the dual of \ref{eqn:main} has the same structure as \eqref{eqn:primal}.
A similar approach was used in \cite{Chang:2015} for solving a subclass of \ref{eqn:main} in which $\mcf{K}=\{0\}$.

To this end, we rewrite \ref{eqn:main} as
\begin{equation*}
	\MinProblem{(x,w)}{\displaystyle \sum_{i\in\mcf{N}} f_i(x_i) + \mcf{I}_\mcf{K}(w)}{\displaystyle \sum_{i\in\mcf{N}} (A_i x_i - b_i) = w,}
\end{equation*}
then form its Lagrangian,
\ifpreprint
	\begin{equation}\label{eqn:main:Lagrangian}
		\mcf{L}( x, w, y) \eqdef \sum_{i\in\mcf{N}} f_i(x_i) + \mcf{I}_\mcf{K}(w) + \Big\langle y, \, \sum_{i\in\mcf{N}} (A_i x_i - b_i) - w \Big\rangle,
	\end{equation}
\else
	\begin{align}\label{eqn:main:Lagrangian}
		\begin{split}
			\mcf{L}( x, w, y) &\eqdef \sum_{i\in\mcf{N}} f_i(x_i) + \mcf{I}_\mcf{K}(w) \\
			&\phantom{{}\eqdef} + \Big\langle y, \, \sum_{i\in\mcf{N}} (A_i x_i - b_i) - w \Big\rangle,
		\end{split}
	\end{align}
\fi
and derive the dual function,
\ifpreprint
	\begin{align*}
		g(y) &\eqdef \inf_{(x,w)} \mcf{L}(x,w,y) \\
		&= \inf_x \Big\lbrace \sum_{i\in\mcf{N}} \left( f_i(x_i) + \innerprod{y}{A_i x_i} \right) \Big\rbrace + \inf_w \left\lbrace \mcf{I}_\mcf{K}(w) - \innerprod{y}{w} \right\rbrace - \sum_{i\in\mcf{N}} \innerprod{y}{b_i} \\
		&= -\sum_{i\in\mcf{N}} \sup_{x_i} \big\lbrace \innerprod{x_i}{-A_i^* y} - f_i(x_i) \big\rbrace - \sup_{w\in\mcf{K}} \innerprod{y}{w} - \sum_{i\in\mcf{N}} \innerprod{y}{b_i} \\
		&= -\sum_{i\in\mcf{N}} f_i^*(-A_i^* y) - \mcf{I}_{\polar{\mcf{K}}}(y) - \sum_{i\in\mcf{N}} \innerprod{y}{b_i}.
	\end{align*}
\else
	\begin{align*}
		g(y) &\eqdef \inf_{(x,w)} \mcf{L}(x,w,y) \\
		&= \inf_x \Big\lbrace \sum_{i\in\mcf{N}} \left( f_i(x_i) + \innerprod{y}{A_i x_i} \right) \Big\rbrace \\
		&\phantom{{}=} + \inf_w \big\lbrace \mcf{I}_\mcf{K}(w) - \innerprod{y}{w} \big\rbrace - \sum_{i\in\mcf{N}} \innerprod{y}{b_i} \\
		&= -\sum_{i\in\mcf{N}} f_i^*(-A_i^* y) - \mcf{I}_{\polar{\mcf{K}}}(y) - \sum_{i\in\mcf{N}} \innerprod{y}{b_i}.
	\end{align*}
\fi
The dual problem is then to maximize the dual function, \ie
\begin{equation}\label{eqn:dual}
	\underset{y}{\textrm{maximize}} \quad -\sum_{i\in\mcf{N}} \big( f_i^*(-A_i^* y) + \innerprod{y}{b_i} + \mcf{I}_{\polar{\mcf{K}}}(y) \big),
	\tag{$\mcf{D}$}
\end{equation}
where we used the fact that $\mcf{I}_{\polar{\mcf{K}}} = \abs{\mcf{N}} \, \mcf{I}_{\polar{\mcf{K}}}$.
Due to Assumption~\ref{ass:problem}, the optimal values of \ref{eqn:main} and \ref{eqn:dual} are finite and equal, and thus the function $\left( f_i^*\circ(-A_i^*)+\mcf{I}_{\polar{\mcf{K}}} \right)$ is proper for all $i\in\mcf{N}$.
This property of the objective functions will be used in the derivation of the algorithms.

We can now apply the consensus \gls{ADMM} for solving the dual problem.
We present in the sequel two variants based on \Alg~\ref{alg:c-admm} and \Alg~\ref{alg:c-admm-sum}.

\subsection{Aggregate variant}
The first method for solving \ref{eqn:main} is obtained by applying \Alg~\ref{alg:c-admm} to \ref{eqn:dual} where
\[
	\psi_i(y) = f_i^*(-A_i^* y) + \innerprod{y}{b_i} + \mcf{I}_{\polar{\mcf{K}}}(y).
\]
In step~\ref{alg:c-admm:psi} of \Alg~\ref{alg:c-admm} each agent needs to solve the following subproblem:
\ifpreprint
	\begin{align*}
		&\phantom{{}=} \underset{y_i}{\text{min}} \Big\lbrace f_i^*(-A_i^* y_i) + \mcf{I}_{\polar{\mcf{K}}}(y_i) + \innerprod{y_i}{b_i + p_i^{k+1}} + \rho \sum_{j\in\mcf{N}_i} \norm{y_i - \tfrac{y_i^k + y_j^k}{2}}^2 \Big\rbrace \\
		&\!= \underset{y_i}{\text{min}} \Big\lbrace f_i^*(-A_i^* y_i) + \mcf{I}_{\polar{\mcf{K}}}(y_i) + \rho d_i \norm{y_i - \tfrac{1}{2\rho d_i} r_i^{k+1} }^2 \Big\rbrace,
	\end{align*}
\else
	\begin{align*}
		&\phantom{{}=} \underset{y_i}{\text{min}} \Big\lbrace f_i^*(-A_i^* y_i) + \mcf{I}_{\polar{\mcf{K}}}(y_i) + \innerprod{y_i}{b_i + p_i^{k+1}} \\
		&\phantom{{}\!= \underset{y_i}{\text{min}} \Big\lbrace} + \rho \sum_{j\in\mcf{N}_i} \norm{y_i - \tfrac{y_i^k + y_j^k}{2}}^2 \Big\rbrace \\
		&\!= \underset{y_i}{\text{min}} \Big\lbrace f_i^*(-A_i^* y_i) + \mcf{I}_{\polar{\mcf{K}}}(y_i) + \rho d_i \norm{y_i - \tfrac{1}{2\rho d_i} r_i^{k+1} }^2 \Big\rbrace,
	\end{align*}
\fi
where
\[
	r_i^{k+1} \eqdef \rho \sum_{j\in\mcf{N}_i} (y_i^k+y_j^k) - (b_i + p_i^{k+1}).
\]
Due to Lemma~\ref{lem:prox-conjugate} (in \App~\ref{app:supporting-results}), the solution to the optimization problem above can be characterized as
\[
	y_i^{k+1} = \tfrac{1}{2\rho d_i} \project{\polar{\mcf{K}}}(A_i x_i^{k+1} + r_i^{k+1}),
\]
where
\ifpreprint
	\[
		(x_i^{k+1},t_i^{k+1}) \in \argmin_{(x_i,t_i)} \left\lbrace f_i(x_i) + \mcf{I}_\mcf{K}(t_i) + \tfrac{1}{4\rho d_i} \norm{A_i x_i + r_i^{k+1} - t_i}^2 \right\rbrace.
	\]
\else
	\begin{align*}
		(x_i^{k+1},t_i^{k+1}) \in \argmin_{(x_i,t_i)} \Big\lbrace & f_i(x_i) + \mcf{I}_\mcf{K}(t_i) \\[-.5em]
		& + \tfrac{1}{4\rho d_i} \norm{A_i x_i + r_i^{k+1} - t_i}^2 \Big\rbrace.
	\end{align*}
\fi
Notice that, if the projection onto $\mcf{K}$ can be evaluated efficiently, then the same holds for its polar cone.
Indeed, due to the Moreau decomposition \cite[\Thm~6.30]{Bauschke:2011}, we have
\[
	\project{\polar{\mcf{K}}}(z) = z - \project{\mcf{K}}(z).
\]

The proposed method is summarized in \Alg~\ref{alg:adc-admm} and can be seen as an extension of \cite[\Alg~3]{Chang:2015} since the two algorithms coincide when $\mcf{K}=\{0\}$.
Note that in this case steps~\ref{alg:adc-admm:x} and~\ref{alg:adc-admm:y} of \Alg~\ref{alg:adc-admm} reduce to
\begin{align*}
	x_i^{k+1} &\gets \argmin_{x_i} \left\lbrace f_i(x_i) + \tfrac{1}{4\rho d_i} \norm{A_i x_i + r_i^{k+1}}^2 \right\rbrace \\
	y_i^{k+1} &\gets \tfrac{1}{2\rho d_i} (A_i x_i^{k+1} + r_i^{k+1}).
\end{align*}

\begin{algorithm}[t]
	\caption{Aggregate dual consensus \gls{ADMM} for \ref{eqn:main}.}
	\label{alg:adc-admm}
	\begin{algorithmic}[1]
		\State \textbf{given} parameter $\rho>0$ and initial value $y_i^0$ for each node $i\in\mcf{N}$
		\State Set $k=0$ and $p_i^0=0$
		\Repeat
			\State Exchange $y_i^k$ with nodes in $\mcf{N}_i$
			\State $p_i^{k+1} \gets p_i^k + \rho \sum_{j\in\mcf{N}_i} (y_i^k - y_j^k)$
			\State $r_i^{k+1} \gets \rho \sum_{j\in\mcf{N}_i} (y_i^k+y_j^k) - (b_i + p_i^{k+1})$
			\ifpreprint
				\State $(x_i^{k+1},t_i^{k+1}) \gets \argmin\limits_{(x_i,t_i)} \left\lbrace f_i(x_i) + \mcf{I}_\mcf{K}(t_i) + \tfrac{1}{4 \rho d_i} \norm{ A_i x_i + r_i^{k+1} - t_i}^2 \right\rbrace$ \label{alg:adc-admm:x}
			\else
				\State $(x_i^{k+1},t_i^{k+1}) \gets \argmin\limits_{(x_i,t_i)} \Big\lbrace f_i(x_i) + \mcf{I}_\mcf{K}(t_i)$ \par\vspace{-.5em}
				$\phantom{\!\!\!\!\!(x_i^{k+1},t_i^{k+1}) \gets \argmin\limits_{(x_i,t_i)} \Big\lbrace} + \tfrac{1}{4 \rho d_i} \norm{ A_i x_i + r_i^{k+1} - t_i}^2 \Big\rbrace$ \label{alg:adc-admm:x}
			\fi
			\State $y_i^{k+1} \gets \tfrac{1}{2 \rho d_i} \project{\polar{\mcf{K}}} \left( A_i x_i^{k+1} + r_i^{k+1} \right)$ \label{alg:adc-admm:y}
			\State $k \gets k+1$
		\Until{termination condition is satisfied}
	\end{algorithmic}
\end{algorithm}

Even though the algorithm solves the dual of \ref{eqn:main}, it also generates a primal solution to the problem, as stated in the following proposition which we prove
\ifpreprint
	in \App~\ref{app:prop:adc-admm-cvg}.
\else
	in the longer version paper \cite{Banjac:2018:arxiv}.
\fi

\begin{proposition}\label{prop:adc-admm-cvg}
	For all $i\in\mcf{N}$ the sequence $\seq{y_i^k}$ generated by \Alg~\ref{alg:adc-admm} converges to $y^\star$ which is a maximizer of~\ref{eqn:dual}.
	Moreover, any limit point of the sequence $\seq{(x_i^k)_{i\in\mcf{N}}}$ is a minimizer of \ref{eqn:main}.
\end{proposition}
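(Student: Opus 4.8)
The plan is to read \Alg~\ref{alg:adc-admm} off as the consensus \gls{ADMM} of \Alg~\ref{alg:c-admm} applied to \ref{eqn:dual} with $\psi_i(y) = f_i^*(-A_i^* y) + \innerprod{y}{b_i} + \mcf{I}_{\polar{\mcf{K}}}(y)$, and to import its convergence. Each $\psi_i$ is convex, closed, and proper (properness was established above from Assumption~\ref{ass:problem}), \ref{eqn:dual} admits a maximizer by Assumption~\ref{ass:problem}(iii), and the consensus reformulation underlying \Alg~\ref{alg:c-admm} has only linear coupling constraints, hence a saddle point; so the convergence theory of consensus \gls{ADMM} \cite{Mateos:2010} yields that the iterates $\seq{y_i^k}$ all converge to a common vector $y^\star$ maximizing \ref{eqn:dual} --- this is the first assertion --- and that the auxiliary variables $\seq{p_i^k}$ converge to limits $p_i^\infty$. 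Two consequences are recorded for later: (i) passing to the limit in the definition of $r_i^{k+1}$ gives $r_i^k \to r_i^\infty \eqdef 2\rho d_i y^\star - b_i - p_i^\infty$; and (ii) since $p_i^0 = 0$ and $\mcf{G}$ is undirected, the terms $\rho(y_i^\ell - y_j^\ell)$ and $\rho(y_j^\ell - y_i^\ell)$ added to $p_i$ and $p_j$ along each edge $(i,j)$ cancel, so $\sum_{i\in\mcf{N}} p_i^k = 0$ for all $k$ and therefore $\sum_{i\in\mcf{N}} p_i^\infty = 0$.

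Next I would analyze a limit point. Fix $(x_i^\star)_{i\in\mcf{N}}$ and a subsequence $(k_\ell)$ with $x_i^{k_\ell}\to x_i^\star$ for all $i$; along $(k_\ell)$ we have $y_i^{k_\ell}\to y^\star$, $r_i^{k_\ell}\to r_i^\infty$, and, by continuity of the projection, $t_i^{k_\ell} = \project{\mcf{K}}(A_i x_i^{k_\ell}+r_i^{k_\ell}) \to t_i^\star \eqdef \project{\mcf{K}}(A_i x_i^\star + r_i^\infty)$. Minimizing in step~\ref{alg:adc-admm:x} first over $t_i$ (giving $t_i^{k+1} = \project{\mcf{K}}(A_i x_i^{k+1}+r_i^{k+1})$) and then over $x_i$, and using $\nabla\tfrac12\dist{\mcf{K}}^2(z) = z - \project{\mcf{K}}(z) = \project{\polar{\mcf{K}}}(z)$ together with step~\ref{alg:adc-admm:y}, produces the stationarity relation $-A_i^* y_i^{k+1} \in \partial f_i(x_i^{k+1})$; moreover the Moreau decomposition \cite[\Thm~6.30]{Bauschke:2011} gives $A_i x_i^{k+1}+r_i^{k+1} = t_i^{k+1} + 2\rho d_i y_i^{k+1}$ with $t_i^{k+1}\in\mcf{K}$, $y_i^{k+1}\in\polar{\mcf{K}}$, and $\innerprod{t_i^{k+1}}{y_i^{k+1}} = 0$ (essentially the content of Lemma~\ref{lem:prox-conjugate}). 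Passing to the limit along $(k_\ell)$ and using closedness of the graph of $\partial f_i$ and of the cones $\mcf{K}$, $\polar{\mcf{K}}$, I obtain for every $i\in\mcf{N}$: $-A_i^* y^\star \in \partial f_i(x_i^\star)$; $y^\star\in\polar{\mcf{K}}$; $\innerprod{t_i^\star}{y^\star} = 0$; and $t_i^\star = A_i x_i^\star - b_i - p_i^\infty \in \mcf{K}$, the last identity using $r_i^\infty = 2\rho d_i y^\star - b_i - p_i^\infty$.

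Finally I would assemble these into optimality conditions for \ref{eqn:main}. Summing $t_i^\star = A_i x_i^\star - b_i - p_i^\infty$ over $i$ and using $\sum_i p_i^\infty = 0$ gives $w^\star \eqdef \sum_{i\in\mcf{N}}(A_i x_i^\star - b_i) = \sum_{i\in\mcf{N}} t_i^\star \in \mcf{K}$ (a finite sum of elements of the convex cone $\mcf{K}$), and $\innerprod{y^\star}{w^\star} = \sum_{i\in\mcf{N}}\innerprod{y^\star}{t_i^\star} = 0$; with $y^\star\in\polar{\mcf{K}}$ this yields $y^\star\in\normalCone{\mcf{K}}(w^\star)$. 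Hence $\big((x_i^\star)_{i\in\mcf{N}},\,w^\star,\,y^\star\big)$ satisfies the stationarity and feasibility (KKT) conditions of the reformulation of \ref{eqn:main} with the slack $w$ introduced above; since that problem is convex with zero duality gap, these conditions are sufficient for optimality, so $(x_i^\star)_{i\in\mcf{N}}$ minimizes \ref{eqn:main}. I expect the main difficulty to be the bookkeeping rather than any isolated deep step: correctly reducing step~\ref{alg:adc-admm:x} to the $x_i$-only stationarity condition, invoking (rather than re-deriving) the convergence of consensus \gls{ADMM} to secure $p_i^k\to p_i^\infty$ --- the vanishing of its increments alone does not imply this --- and matching the limiting relations exactly to the KKT system of \ref{eqn:main}.
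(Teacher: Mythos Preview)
Your proposal is correct and follows essentially the same route as the paper: invoke the convergence of consensus \gls{ADMM} from \cite{Mateos:2010} for the dual iterates, derive $-A_i^* y_i^{k+1}\in\partial f_i(x_i^{k+1})$ from the subproblem via the Moreau decomposition, use $\sum_{i\in\mcf{N}} p_i^k=0$, and assemble the limiting KKT system of \ref{eqn:main}. The only cosmetic differences are that the paper avoids invoking convergence of $p_i^k$ by expressing the primal residual $\sum_i(A_i x_i^{k+1}-b_i)-w^{k+1}$ directly in terms of $(y_i^{k+1}-y_i^k)$ and $(y_i^k-y_j^k)$, and that it packages your orthogonality-plus-polar-cone argument as a short lemma showing $y\in\normalCone{\mcf{K}}(t_i)$ for all $i$ implies $y\in\normalCone{\mcf{K}}\!\big(\sum_i t_i\big)$.
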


\subsection{Decomposed variant}
In some cases the conic constraint in step~\ref{alg:adc-admm:x} of \Alg~\ref{alg:adc-admm} makes the subproblem hard to solve.
We therefore propose another method for solving \ref{eqn:main} which is obtained by applying \Alg~\ref{alg:c-admm-sum} to \ref{eqn:dual} with
\begin{equation*}
	\varphi_i(y) = f_i^*(-A_i^* y) + \innerprod{y}{b_i}
	\quad\text{and}\quad
	\vartheta_i(y) = \mcf{I}_{\polar{\mcf{K}}}(y).
\end{equation*}

In step~\ref{alg:c-admm-sum:phi} of \Alg~\ref{alg:c-admm-sum} each agent solves the following subproblem:
\ifpreprint
	\begin{align*}
		&\phantom{{}=} \underset{y_i}{\text{min}} \Big\lbrace f_i^*(-A_i^* y_i) + \innerprod{y_i}{b_i + p_i^{k+1} + s_i^{k+1}} + \tfrac{\sigma}{2}\norm{y_i - z_i^k}^2 + \rho \sum_{j\in\mcf{N}_i} \norm{y_i - \tfrac{y_i^k + y_j^k}{2}}^2 \Big\rbrace \\
		&\!= \underset{y_i}{\text{min}} \Big\lbrace f_i^*(-A_i^* y_i) + \tfrac{\sigma+2\rho d_i}{2} \norm{y_i - \tfrac{1}{\sigma+2\rho d_i} r_i^{k+1} }^2 \Big\rbrace,
	\end{align*}
\else
	\begin{align*}
		&\phantom{{}=} \underset{y_i}{\text{min}} \Big\lbrace f_i^*(-A_i^* y_i) + \innerprod{y_i}{b_i + p_i^{k+1} + s_i^{k+1}} \\
		&\phantom{= \underset{y_i}{\text{min}} \Big\lbrace} + \tfrac{\sigma}{2}\norm{y_i - z_i^k}^2 + \rho \sum_{j\in\mcf{N}_i} \norm{y_i - \tfrac{y_i^k + y_j^k}{2}}^2 \Big\rbrace \\
		&\!= \underset{y_i}{\text{min}} \Big\lbrace f_i^*(-A_i^* y_i) + \tfrac{\sigma+2\rho d_i}{2} \norm{y_i - \tfrac{1}{\sigma+2\rho d_i} r_i^{k+1} }^2 \Big\rbrace,
	\end{align*}
\fi
where
\[
	r_i^{k+1} \eqdef \sigma z_i^k + \rho \sum_{j\in\mcf{N}_i} (y_i^k+y_j^k) - (b_i + p_i^{k+1} + s_i^{k+1}).
\]
Due to Lemma~\ref{lem:prox-conjugate}, the solution to the problem above can be characterized as
\[
	y_i^{k+1} = \tfrac{1}{\sigma+2\rho d_i} (A_i x_i^{k+1} + r_i^{k+1}),
\]
where
\[
	x_i^{k+1} \in \argmin_{x_i} \left\lbrace f_i(x_i) + \tfrac{1}{2(\sigma+2\rho d_i)} \norm{A_i x_i + r_i^{k+1}}^2 \right\rbrace.
\]

\begin{algorithm}[t]
	\caption{Decomposed dual consensus \gls{ADMM} for \ref{eqn:main}.}
	\label{alg:ddc-admm}
	\begin{algorithmic}[1]
		\State \textbf{given} parameters $\sigma>0$, $\rho>0$ and initial values $y_i^0,z_i^0,s_i^0$ for each node $i\in\mcf{N}$
		\State Set $k=0$ and $p_i^0=0$
		\Repeat
			\State Exchange $y_i^k$ with nodes in $\mcf{N}_i$
			\State $p_i^{k+1} \gets p_i^k + \rho \sum_{j\in\mcf{N}_i} (y_i^k - y_j^k)$
			\State $s_i^{k+1} \gets s_i^k + \sigma (y_i^k - z_i^k)$
			\State $r_i^{k+1} \gets \sigma z_i^k + \rho \sum_{j\in\mcf{N}_i} (y_i^k+y_j^k) - (b_i + p_i^{k+1} + s_i^{k+1})$
			\State $x_i^{k+1} \gets \argmin\limits_{x_i} \Big\lbrace f_i(x_i) + \tfrac{1}{2(\sigma + 2 \rho d_i)} \norm{ A_i x_i + r_i^{k+1} }^2 \Big\rbrace$ \label{alg:ddc-admm:x}
			\State $y_i^{k+1} \gets \tfrac{1}{\sigma + 2 \rho d_i} \left( A_i x_i^{k+1} + r_i^{k+1} \right)$ \label{alg:ddc-admm:y}
			\State $z_i^{k+1} \gets \project{\polar{\mcf{K}}} \left( y_i^{k+1} + \tfrac{1}{\sigma} s_i^{k+1} \right)$ \label{alg:ddc-admm:z}
			\State $k \gets k+1$
		\Until{termination condition is satisfied}
	\end{algorithmic}
\end{algorithm}

The proposed algorithm is summarized in \Alg~\ref{alg:ddc-admm}.
The following proposition,
\ifpreprint
	proven in \App~\ref{app:prop:ddc-admm-cvg},
\else
	whose proof can be found in the longer version paper \cite{Banjac:2018:arxiv},
\fi
states the convergence result.

\begin{proposition}\label{prop:ddc-admm-cvg}
	For all $i\in\mcf{N}$ the sequences $\seq{y_i^k}$ and $\seq{z_i^k}$ generated by \Alg~\ref{alg:ddc-admm} converge to the same vector $y^\star$ which is a maximizer of \ref{eqn:dual}.
	Moreover, any limit point of the sequence $\seq{(x_i^k)_{i\in\mcf{N}}}$ is a minimizer of \ref{eqn:main}.
\end{proposition}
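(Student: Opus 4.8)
The plan is to derive the statement from the convergence theory of the consensus \gls{ADMM} (\Alg~\ref{alg:c-admm-sum}) applied to \ref{eqn:dual}, and then to recover primal optimality by passing to the limit in the first-order optimality conditions of \ref{eqn:main}, in the same spirit as the proof of Proposition~\ref{prop:adc-admm-cvg}. With $\varphi_i = f_i^*\circ(-A_i^*) + \innerprod{\cdot}{b_i}$ and $\vartheta_i = \mcf{I}_{\polar{\mcf{K}}}$, both summands are convex, closed, and proper, and $\varphi_i + \vartheta_i$ is proper for each $i\in\mcf{N}$ (as recorded below \ref{eqn:dual}); Lemma~\ref{lem:prox-conjugate} shows that step~\ref{alg:ddc-admm:x} is solvable and that, followed by step~\ref{alg:ddc-admm:y}, it implements the (uniquely solvable) proximal step of \Alg~\ref{alg:c-admm-sum} on $\varphi_i$, so \Alg~\ref{alg:ddc-admm} is a genuine instance of \Alg~\ref{alg:c-admm-sum} for \ref{eqn:dual}. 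Since \ref{eqn:dual} has a maximizer by Assumption~\ref{ass:problem}, the convergence analysis of \Alg~\ref{alg:c-admm-sum} --- equivalently, the standard identification of \gls{ADMM} with Douglas--Rachford splitting of a firmly nonexpansive operator \cite{Bauschke:2011} --- gives, for every $i\in\mcf{N}$, that $y_i^k$ and $z_i^k$ converge to a common maximizer $y^\star$ of \ref{eqn:dual} and that the dual iterates converge, say $p_i^k\to p_i^\star$ and $s_i^k\to s_i^\star$. This already settles the first assertion.

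Next I would read optimality information off the three updates. From step~\ref{alg:ddc-admm:z} and the $s$-update, $s_i^{k+2} = \sigma\bigl(y_i^{k+1} + \tfrac{1}{\sigma}s_i^{k+1} - z_i^{k+1}\bigr)$, and the projection step gives $y_i^{k+1} + \tfrac1\sigma s_i^{k+1} - z_i^{k+1} \in \normalCone{\polar{\mcf{K}}}(z_i^{k+1})$, hence $s_i^{k+2}\in\normalCone{\polar{\mcf{K}}}(z_i^{k+1})$. Letting $k\to\infty$ and using $z_i^k\to y^\star$, $s_i^k\to s_i^\star$, and the closedness of the graph of $\normalCone{\polar{\mcf{K}}}$ (the subdifferential of $\mcf{I}_{\polar{\mcf{K}}}$), one gets $s_i^\star\in\normalCone{\polar{\mcf{K}}}(y^\star)$. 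By the bipolar theorem and the identity $\normalCone{\mcf{C}}(x)=\polar{\mcf{C}}\cap x^{\perp}$ valid for a closed convex cone $\mcf{C}$ and $x\in\mcf{C}$, this says $s_i^\star\in\mcf{K}$ and $\innerprod{s_i^\star}{y^\star}=0$, so $\sum_{i\in\mcf{N}}s_i^\star\in\mcf{K}$ and $\innerprod{\sum_{i\in\mcf{N}}s_i^\star}{y^\star}=0$. Moreover $z_i^k\in\polar{\mcf{K}}$ and $\polar{\mcf{K}}$ is closed, so $y^\star\in\polar{\mcf{K}}$. Finally, the optimality condition for step~\ref{alg:ddc-admm:x}, combined with step~\ref{alg:ddc-admm:y}, reads $-A_i^* y_i^k\in\partial f_i(x_i^k)$ (the subdifferential sum rule applies because the quadratic term is finite-valued).

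To link these with the primal constraint, I would eliminate $r_i^{k+1}$ between step~\ref{alg:ddc-admm:y} and the definition of $r_i^{k+1}$ and sum over $i\in\mcf{N}$. Using that $\mcf{G}$ is undirected --- so $\sum_{i\in\mcf{N}}\sum_{j\in\mcf{N}_i}(y_i^k - y_j^k)=0$, whence $\sum_{i\in\mcf{N}}p_i^k=\sum_{i\in\mcf{N}}p_i^0=0$, and $\sum_{i\in\mcf{N}}\sum_{j\in\mcf{N}_i}(y_i^k+y_j^k)=2\sum_{i\in\mcf{N}}d_i y_i^k$ --- the sum telescopes to
\begin{align*}
	\sum_{i\in\mcf{N}}(A_i x_i^{k+1} - b_i)
	&= \sum_{i\in\mcf{N}}s_i^{k+1} + \sigma\sum_{i\in\mcf{N}}(y_i^{k+1} - z_i^k) \\
	&\phantom{{}={}} + 2\rho\sum_{i\in\mcf{N}}d_i(y_i^{k+1} - y_i^k).
\end{align*}
Let $(x_i^\star)_{i\in\mcf{N}}$ be any limit point of $\seq{(x_i^k)_{i\in\mcf{N}}}$, attained along a subsequence $(k_\ell)$. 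Since $y_i^k,z_i^k\to y^\star$, the last two sums vanish in the limit, so $\sum_{i\in\mcf{N}}(A_i x_i^\star - b_i)=\sum_{i\in\mcf{N}}s_i^\star\in\mcf{K}$ (primal feasibility) and $\innerprod{y^\star}{\sum_{i\in\mcf{N}}(A_i x_i^\star - b_i)}=0$ (complementarity). Passing to the limit along $(k_\ell)$ in $-A_i^* y_i^k\in\partial f_i(x_i^k)$ and using the closedness of the graph of $\partial f_i$ gives $-A_i^* y^\star\in\partial f_i(x_i^\star)$ (stationarity). Together with $y^\star\in\polar{\mcf{K}}$ these are exactly the first-order optimality (KKT) conditions of \ref{eqn:main}, which are sufficient for optimality under Assumption~\ref{ass:problem}; hence $(x_i^\star)_{i\in\mcf{N}}$ minimizes \ref{eqn:main}.

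I expect the delicate part to be identifying $\sum_{i\in\mcf{N}}s_i^\star$ as an element of $\mcf{K}$ that is complementary to $y^\star$ and matching it with $\lim_{k}\sum_{i\in\mcf{N}}(A_i x_i^k - b_i)$: this relies on convergence of the \gls{ADMM} dual iterates $\seq{s_i^k}$ (inherited from the analysis of \Alg~\ref{alg:c-admm-sum}), on the graph summation exploiting that $\mcf{G}$ is undirected, and on the normal-cone identity $\normalCone{\polar{\mcf{K}}}(y^\star)=\mcf{K}\cap(y^\star)^{\perp}$. By contrast, the stationarity and dual-feasibility parts reduce to routine closed-graph and closed-set arguments, and the well-posedness of the per-agent subproblems follows from Lemma~\ref{lem:prox-conjugate}.
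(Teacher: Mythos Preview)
Your proposal is correct and follows essentially the same route as the paper's proof: both invoke the convergence theory of \Alg~\ref{alg:c-admm-sum} applied to \ref{eqn:dual} to obtain $y_i^k,z_i^k\to y^\star$ and convergence of the multipliers $s_i^k$, extract stationarity $-A_i^* y_i^{k+1}\in\partial f_i(x_i^{k+1})$ from the $x$-subproblem, sum the $y$-update over $i\in\mcf{N}$ (using $\sum_{i} p_i^k=0$ and the symmetry of the undirected graph) to see that $\sum_i(A_ix_i^{k+1}-b_i)-\sum_i s_i^{k+1}\to 0$, and read primal feasibility and complementarity off the projection step~\ref{alg:ddc-admm:z}. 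The only cosmetic difference is that the paper phrases the conic condition as $y^\star\in\normalCone{\mcf{K}}(\lim_k s_i^k)$ and then invokes Lemma~\ref{lem:normal-cone} to aggregate over $i$, whereas you use the equivalent dual inclusion $s_i^\star\in\normalCone{\polar{\mcf{K}}}(y^\star)=\mcf{K}\cap(y^\star)^{\perp}$ and sum directly; this is precisely the content of Lemma~\ref{lem:normal-cone} unpacked.
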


In both proposed methods each agent communicates only with its neighbors and requires no global information about the graph.
Also, the agents can update their decision vectors in parallel since they only use their neighbors' information from the previous iteration.
Finally, the methods converge for any positive values of their parameters, making them robust against noisy and unreliable problem data.
Although \Alg~\ref{alg:ddc-admm} has simpler iterations than \Alg~\ref{alg:adc-admm}, it is expected to converge slower due to additional regularization terms in the augmented Lagrangian associated with the method;
\ifpreprint
	see \App~A for more details.
\else
	see \App~A of the longer version paper \cite{Banjac:2018:arxiv} for more details.
\fi

Observe that in both algorithms each agent solves a sequence of optimization problems parameterized in $r_i^{k+1}$.
Provided that optimization solvers used by the agents can be warm-started (see \eg \cite{Ferreau:2014,Stellato:2018,Garstka:2019}), the computational burden of the proposed algorithms can be reduced significantly.

\begin{remark}
	The objective function in step~\ref{alg:adc-admm:x} of \Alg~\ref{alg:adc-admm} is not necessarily strongly convex, and thus the set of minimizers is not a singleton in general.
	However, as the function $\left( f_i^*\circ(-A_i^*)+\mcf{I}_{\polar{\mcf{K}}} \right)$ is proper, the optimization problem in \Alg~\ref{alg:adc-admm} has at least one solution due to Lemma~\ref{lem:prox-conjugate}.
	The same holds for the optimization problem in step~\ref{alg:ddc-admm:x} of \Alg~\ref{alg:ddc-admm}.
\end{remark}

\section{Numerical example}

Consider the \glsentrylong{BPD} problem:
\begin{equation}
	\MinProblem[{eqn:bpd}]{u}{\norm{u}_1}{\norm{Ru-r}_2 \le \eps,}
\end{equation}
with decision variable $u\in\Re^q$ and problem data $R\in\Re^{p \times q}$, $r\in\Re^p$, and $\eps \ge 0$.
The problem arises in compressed sensing where the goal is to recover a sparse vector $u$ from noisy measurements $r \approx Ru$ \cite{Donoho:2006}.

The dimensions of \eqref{eqn:bpd} can be very large, making it challenging to solve on a single computational unit.
To solve the problem in a distributed fashion, we partition $u$ into $\abs{\mcf{N}}$ blocks so that $u=(u_i)_{i\in\mcf{N}}$.
We then interpret each of these blocks as nodes and connect them through a communication graph $\mcf{G}=(\mcf{N},\mcf{E})$.
The resulting problem is
\begin{equation*}
	\MinProblem{u}{\displaystyle \sum_{i\in\mcf{N}} \norm{u_i}_1}{\normbig{\sum_{i\in\mcf{N}} (R_i u_i-r_i)}_2 \le \eps,}
\end{equation*}
where $R=[R_i]_{i\in\mcf{N}}$, and $r_i=r/\abs{\mcf{N}}$ for all $i\in\mcf{N}$.
Note that the problem above can be reformulated in the form of \ref{eqn:main}, \ie
\begin{equation*}
	\MinProblem{(u,v)}{\displaystyle \sum_{i\in\mcf{N}} \big( \norm{u_i}_1 + \mcf{I}_{ \{ \eps/\abs{\mcf{N}} \} }(v_i) \big)}{\displaystyle \sum_{i\in\mcf{N}} ( R_i u_i-r_i, v_i ) \in \mcf{S},}
\end{equation*}
where $v\eqdef(v_i)_{i\in\mcf{N}}$, $v_i\in\Re$, and $\mcf{S} \eqdef \{ (z,t)\in\Re^p\times\Re \mid \norm{z}_2 \le t \}$ is the second-order cone whose projection can be evaluated in a closed form \cite[\S 6.3.2]{Parikh:2014}.

We generate the problem data as described in \cite{Aybat:2016:arxiv}, \ie we set $p=20$, $q=120$, each element of $R$ is \iid drawn from the standard normal distribution, $r = R u^\star + \eta$ where $u^\star$ is generated by choosing $\kappa=20$ of its elements, uniformly at random, drawn from the standard normal distribution, and the rest of the elements are set to zero, while $\eta$ is a noise vector whose elements are \iid drawn from $\mcf{N}(0, \kappa\, 10^{-4})$, and $\eps>0$ is chosen so that the probability that $\norm{\eta}_2 \le \eps$ is equal to $0.95$.
Finally, we generate $\mcf{G}$ as a random small-world network with $\abs{\mcf{N}}=10$ nodes and $\abs{\mcf{E}}=15$ edges so that $\abs{\mcf{N}}$ edges create a random cycle over nodes, and the remaining $\abs{\mcf{E}}-\abs{\mcf{N}}$ edges are selected uniformly at random.
We partition $u$ into $\abs{\mcf{N}}$ blocks of the same dimensions, so that $R_i\in\Re^{p\times(q/\abs{\mcf{N}})}$ for all $i\in\mcf{N}$.

\ifpreprint
	\begin{figure*}[t]
		\centering
		\footnotesize
		\begin{tikzpicture}
			\begin{axis}[%
				hide axis,
				xmin=10,	
				xmax=50,	
				ymin=0,		
				ymax=0.4,	
				legend cell align=left,
				legend columns=3,
				legend style={nodes={scale=1},
							  fill=white,
							  fill opacity=1,
							  draw opacity=1,
							  text opacity=1,
							  /tikz/every even column/.append style={column sep=3em}},
				every axis plot/.append style={very thick}
				]
				\addlegendimage{blue}
				\addlegendentry{\Alg~\ref{alg:adc-admm}};
				\addlegendimage{red}
				\addlegendentry{\Alg~\ref{alg:ddc-admm}};
				\addlegendimage{green}
				\addlegendentry{DPDA};
			\end{axis}
		\end{tikzpicture}
		\\[.5em]
		\begin{tabular}{rl}
			\begin{tikzpicture}[baseline,trim axis left]
				\begin{axis}[
					width=.49\textwidth,
					height=.4\textwidth,
					ymin = 4e-7,
					ymax = 1e1,
					xmin = -200,
					xmax = 2200,
					title={$\displaystyle \frac{\abs{\norm{u^k}_1-\norm{u^\star}_1}}{\norm{u^\star}_1}$},
					title style={at={(.9,.8)}, anchor=east},
					ylabel={Relative suboptimality},
					ytick={1e-6,1e-4,1e-2,1e0},
					yticklabel pos=left,
					ymode=log,
					xticklabels={,,},
					every axis plot/.append style={very thick}
					]
					\addplot [blue, select coords between index={1}{2000}] table[x=iter, y=relSubopt, col sep=comma] {data/mean_stats_ADCADMM.csv};
					\addplot [red, select coords between index={1}{2000}] table[x=iter, y=relSubopt, col sep=comma] {data/mean_stats_DDCADMM.csv};
					\addplot [green, select coords between index={1}{2000}] table[x=iter, y=relSubopt, col sep=comma] {data/mean_stats_DPDA.csv};
				\end{axis}
			\end{tikzpicture}
			&
			\begin{tikzpicture}[baseline,trim axis right]
				\begin{axis}[
					width=.49\textwidth,
					height=.4\textwidth,
					ymin = 1e-6,
					ymax = 1e2,
					xmin = -200,
					xmax = 2200,
					title={$\max\left( \norm{Ru^k-r}_2-\eps, \, 0 \right)$},
					title style={at={(.9,.8)}, anchor=east},
					ylabel={Infeasibility},
					yticklabel pos=right,
					ymode=log,
					xticklabels={,,},
					every axis plot/.append style={very thick}
					]
					\addplot [blue, select coords between index={1}{2000}] table[x=iter, y=infeas, col sep=comma] {data/mean_stats_ADCADMM.csv};
					\addplot [red, select coords between index={1}{2000}] table[x=iter, y=infeas, col sep=comma] {data/mean_stats_DDCADMM.csv};
					\addplot [green, select coords between index={1}{2000}] table[x=iter, y=infeas, col sep=comma] {data/mean_stats_DPDA.csv};
				\end{axis}
			\end{tikzpicture}
			\\[-.75em]
			\begin{tikzpicture}[baseline,trim axis left]
				\begin{axis}[
					width=.49\textwidth,
					height=.4\textwidth,
					ymin = 6e-6,
					ymax = 1e0,
					xmin = -200,
					xmax = 2200,
					title={$\norm{u^k-u^\star}_2$},
					title style={at={(.9,.8)}, anchor=east},
					xlabel={Iteration $k$},
					ylabel={Distance to a solution},
					yticklabel pos=left,
					ymode=log,
					every axis plot/.append style={very thick}
					]
					\addplot [blue, select coords between index={1}{2000}] table[x=iter, y=consViol, col sep=comma] {data/mean_stats_ADCADMM.csv};
					\addplot [red, select coords between index={1}{2000}] table[x=iter, y=consViol, col sep=comma] {data/mean_stats_DDCADMM.csv};
					\addplot [green, select coords between index={1}{2000}] table[x=iter, y=consViol, col sep=comma] {data/mean_stats_DPDA.csv};
				\end{axis}
			\end{tikzpicture}%
			&
			\begin{tikzpicture}[baseline,trim axis right]
				\begin{axis}[
					width=.49\textwidth,
					height=.4\textwidth,
					ymin=3e-4,
					ymax=1e1,
					xmin = -200,
					xmax = 2200,
					title={$\max\limits_{i\in\mcf{N}} \, \norm{y_i^k-\bar{y}^k}_2$},
					title style={at={(.9,.8)}, anchor=east},
					xlabel={Iteration $k$},
					ylabel={Consensus violation},
					yticklabel pos=right,
					ymode=log,
					every axis plot/.append style={very thick}
					]
					\addplot [blue, select coords between index={1}{2000}] table[x=iter, y=solDist, col sep=comma] {data/mean_stats_ADCADMM.csv};
					\addplot [red, select coords between index={1}{2000}] table[x=iter, y=solDist, col sep=comma] {data/mean_stats_DDCADMM.csv};
					\addplot [green, select coords between index={1}{2000}] table[x=iter, y=solDist, col sep=comma] {data/mean_stats_DPDA.csv};
				\end{axis}
			\end{tikzpicture}
		\end{tabular}
		\caption{%
			Numerical performance of \Alg~\ref{alg:adc-admm}, \Alg~\ref{alg:ddc-admm} and \gls{DPDA} \cite{Aybat:2016,Aybat:2016:arxiv} for solving the basis pursuit denoising problem \eqref{eqn:bpd}, where $u^\star$ denotes its optimal solution, and $\bar{y}^k \eqdef \tfrac{1}{\abs{\mcf{N}}} \sum_{i\in\mcf{N}} y_i^k$.
			We show the mean results over $10$ randomly generated instances of the problem.
		}
		\label{fig:numerical-comparison}
	\end{figure*}
\else
	\begin{figure*}[t]
		\centering
		\small
		\begin{tikzpicture}
			\begin{axis}[%
				hide axis,
				xmin=10,	
				xmax=50,	
				ymin=0,		
				ymax=0.4,	
				legend cell align=left,
				legend columns=3,
				legend style={nodes={scale=1},
							  fill=white,
							  fill opacity=1,
							  draw opacity=1,
							  text opacity=1,
							  /tikz/every even column/.append style={column sep=3em}},
				every axis plot/.append style={very thick}
				]
				\addlegendimage{blue}
				\addlegendentry{\Alg~\ref{alg:adc-admm}};
				\addlegendimage{red}
				\addlegendentry{\Alg~\ref{alg:ddc-admm}};
				\addlegendimage{green}
				\addlegendentry{DPDA};
			\end{axis}
		\end{tikzpicture}
		\\[.75em]
		\begin{tabular}{rl}
			\begin{tikzpicture}[baseline,trim axis left]
				\begin{axis}[
					width=\columnwidth,
					height=.8\columnwidth,
					ymin=4e-7,
					ymax=1e1,
					xmin=-155,
					xmax=2155,
					title={$\displaystyle \frac{\abs{\norm{u^k}_1-\norm{u^\star}_1}}{\norm{u^\star}_1}$},
					title style={at={(.9,.8)}, anchor=east},
					ylabel={Relative suboptimality},
					ytick={1e-6,1e-4,1e-2,1e0},
					yticklabel pos=left,
					ymode=log,
					xticklabels={,,},
					every axis plot/.append style={very thick}
					]
					\addplot [blue, select coords between index={1}{2000}] table[x=iter, y=relSubopt, col sep=comma] {data/mean_stats_ADCADMM.csv};
					\addplot [red, select coords between index={1}{2000}] table[x=iter, y=relSubopt, col sep=comma] {data/mean_stats_DDCADMM.csv};
					\addplot [green, select coords between index={1}{2000}] table[x=iter, y=relSubopt, col sep=comma] {data/mean_stats_DPDA.csv};
				\end{axis}
			\end{tikzpicture}
			&
			\begin{tikzpicture}[baseline,trim axis right]
				\begin{axis}[
					width=\columnwidth,
					height=.8\columnwidth,
					ymin=1e-6,
					ymax=1e2,
					xmin=-155,
					xmax=2155,
					title={$\max\left( \norm{Ru^k-r}_2-\eps, \, 0 \right)$},
					title style={at={(.9,.8)}, anchor=east},
					ylabel={Infeasibility},
					yticklabel pos=right,
					ymode=log,
					xticklabels={,,},
					every axis plot/.append style={very thick}
					]
					\addplot [blue, select coords between index={1}{2000}] table[x=iter, y=infeas, col sep=comma] {data/mean_stats_ADCADMM.csv};
					\addplot [red, select coords between index={1}{2000}] table[x=iter, y=infeas, col sep=comma] {data/mean_stats_DDCADMM.csv};
					\addplot [green, select coords between index={1}{2000}] table[x=iter, y=infeas, col sep=comma] {data/mean_stats_DPDA.csv};
				\end{axis}
			\end{tikzpicture}
			\\[-.75em]
			\begin{tikzpicture}[baseline,trim axis left]
				\begin{axis}[
					width=\columnwidth,
					height=.8\columnwidth,
					ymin=6e-6,
					ymax=1e0,
					xmin=-155,
					xmax=2155,
					title={$\norm{u^k-u^\star}_2$},
					title style={at={(.9,.8)}, anchor=east},
					xlabel={Iteration $k$},
					ylabel={Distance to a solution},
					yticklabel pos=left,
					ymode=log,
					every axis plot/.append style={very thick}
					]
					\addplot [blue, select coords between index={1}{2000}] table[x=iter, y=consViol, col sep=comma] {data/mean_stats_ADCADMM.csv};
					\addplot [red, select coords between index={1}{2000}] table[x=iter, y=consViol, col sep=comma] {data/mean_stats_DDCADMM.csv};
					\addplot [green, select coords between index={1}{2000}] table[x=iter, y=consViol, col sep=comma] {data/mean_stats_DPDA.csv};
				\end{axis}
			\end{tikzpicture}%
			&
			\begin{tikzpicture}[baseline,trim axis right]
				\begin{axis}[
					width=\columnwidth,
					height=.8\columnwidth,
					ymin=3e-4,
					ymax=1e1,
					xmin=-155,
					xmax=2155,
					title={$\max\limits_{i\in\mcf{N}} \, \norm{y_i^k-\bar{y}^k}_2$},
					title style={at={(.9,.8)}, anchor=east},
					xlabel={Iteration $k$},
					ylabel={Consensus violation},
					yticklabel pos=right,
					ymode=log,
					every axis plot/.append style={very thick}
					]
					\addplot [blue, select coords between index={1}{2000}] table[x=iter, y=solDist, col sep=comma] {data/mean_stats_ADCADMM.csv};
					\addplot [red, select coords between index={1}{2000}] table[x=iter, y=solDist, col sep=comma] {data/mean_stats_DDCADMM.csv};
					\addplot [green, select coords between index={1}{2000}] table[x=iter, y=solDist, col sep=comma] {data/mean_stats_DPDA.csv};
				\end{axis}
			\end{tikzpicture}
		\end{tabular}
		\caption{%
			Numerical performance of \Alg~\ref{alg:adc-admm}, \Alg~\ref{alg:ddc-admm} and \gls{DPDA} \cite{Aybat:2016,Aybat:2016:arxiv} for solving the basis pursuit denoising problem \eqref{eqn:bpd}, where $u^\star$ denotes its optimal solution, and $\bar{y}^k \eqdef \tfrac{1}{\abs{\mcf{N}}} \sum_{i\in\mcf{N}} y_i^k$.
			We show the mean results over $10$ randomly generated instances of the problem.
		}
		\label{fig:numerical-comparison}
	\end{figure*}
\fi

We compare our methods to \gls{DPDA} \cite{Aybat:2016,Aybat:2016:arxiv} which is a decentralized and parallelizable algorithm that has recently been proposed for solving \ref{eqn:main}.
\Fig~\ref{fig:numerical-comparison} shows numerical performance of \Alg~\ref{alg:adc-admm}, \Alg~\ref{alg:ddc-admm} and \gls{DPDA} for solving \eqref{eqn:bpd}.
As performance metrics, we consider the mean values of relative suboptimality, infeasibility, distance to a solution, and consensus violation over $10$ different problem instances.
For each of these instances we randomly generate both the network and the problem data.
The parameters of \gls{DPDA} are chosen depending on the problem data as suggested in \cite{Aybat:2016:arxiv}, while the parameters appearing in \Alg~\ref{alg:adc-admm} and \Alg~\ref{alg:ddc-admm} are set to $\sigma=\rho=1$.

It can be seen that \Alg~\ref{alg:adc-admm} and \Alg~\ref{alg:ddc-admm} require a smaller number of iterations than \gls{DPDA} for attaining the same accuracy.
However, the computational complexity of performing each iteration of \gls{DPDA} is lower.
More specifically, \gls{DPDA} only evaluates the proximal operator of the $\ell_1$--norm, which has a closed-form solution \cite[\S 6.5.2]{Parikh:2014}.
In contrast, in each iteration \Alg~\ref{alg:adc-admm} and \Alg~\ref{alg:ddc-admm} solve a \glsentrylong{SOCP} and a \glsentrylong{QP}, respectively.
This means that \Alg~\ref{alg:adc-admm} and \Alg~\ref{alg:ddc-admm} are preferred over \gls{DPDA} when the cost of agent-to-agent communication outweighs the cost of computations performed by the agents.
Since the convergence rates of \Alg~\ref{alg:adc-admm} and \Alg~\ref{alg:ddc-admm} with respect to the number of iterations are very similar, the latter method is more efficient due to simpler optimization problems solved by the agents.

Note that the time complexity of the algorithms depends not only on the iteration complexity, but also on communication delays and properties of optimization solvers used by the agents such as precision, whether they support warm-starting etc.

\section{Conclusion}
We propose two methods based on \gls{ADMM} for solving resource allocation problems over a network of computational agents.
Both methods are fully parallelizable and decentralized in the sense that each agent exchanges information only with its neighbors in the network and requires only its own data for updating its decision.
We prove convergence of both methods for any positive values of the algorithm parameters.
Our methods are compared numerically against a competing method, and were shown to require a smaller number of iterations to attain the same accuracy.

\ifpreprint
	\appendix
\else
	\appendices
\fi
\setcounter{algorithm}{0}
\renewcommand{\thealgorithm}{\thesection.\arabic{algorithm}}

\section{Consensus ADMM}\label{app:consensus-admm}

The authors in \cite{Mateos:2010} propose two decentralized methods for solving the consensus optimization problem \eqref{eqn:primal} over a connected undirected graph.
The first method assumes that the proximal operator of $\psi_i$ can be evaluated efficiently, and is outlined in \Alg~\ref{alg:c-admm}.

The second method assumes that $\psi_i$ can be represented as the sum of two functions, \ie
\[
	\psi_i(y) = \varphi_i(y) + \vartheta_i(y),
\]
where both $\varphi_i$ and $\vartheta_i$ are convex, closed, and proper.
It is often the case that the proximal operator of $\psi_i$ is much harder to evaluate than the proximal operators of $\varphi_i$ and $\vartheta_i$.
This is the reason for introducing another method that evaluates the proximal operators of $\varphi_i$ and $\vartheta_i$ instead.
The method is outlined in \Alg~\ref{alg:c-admm-sum}.

\begin{algorithm}[t]
	\caption{Consensus \gls{ADMM} for \eqref{eqn:primal}.}
	\label{alg:c-admm}
	\begin{algorithmic}[1]
		\State \textbf{given} parameter $\rho>0$ and initial value $y_i^0$ for each node $i\in\mcf{N}$
		\State Set $k=0$ and $p_i^0=0$
		\Repeat
		\State Exchange $y_i^k$ with nodes in $\mcf{N}_i$
		\State $p_i^{k+1} \gets p_i^k + \rho \sum_{j\in\mcf{N}_i} (y_i^k - y_j^k)$
		\ifpreprint
			\State $y_i^{k+1} \gets \argmin\limits_{y_i} \left\lbrace \psi_i(y_i) + \innerprod{y_i}{p_i^{k+1}} + \rho \sum_{j\in\mcf{N}_i} \normbig{y_i - \frac{y_i^k + y_j^k}{2}}^2 \right\rbrace$ \label{alg:c-admm:psi}
		\else
			\State $y_i^{k+1} \gets \argmin\limits_{y_i} \Big\lbrace \psi_i(y_i) + \innerprod{y_i}{p_i^{k+1}}$ \par\vspace{-.5em}
			$\!\!\!\!\!\! \phantom{y_i^{k+1} \gets \argmin\limits_{y_i} \Big\lbrace} + \rho \sum_{j\in\mcf{N}_i} \normbig{y_i - \frac{y_i^k + y_j^k}{2}}^2 \Big\rbrace$ \label{alg:c-admm:psi}
		\fi
		\State $k \gets k+1$
		\Until{termination condition is satisfied}
	\end{algorithmic}
\end{algorithm}

\begin{algorithm}[t]
	\caption{Consensus \gls{ADMM} for \eqref{eqn:primal} where $\psi_i=\varphi_i+\vartheta_i$.}
	\label{alg:c-admm-sum}
	\begin{algorithmic}[1]
		\State \textbf{given} parameters $\sigma>0$, $\rho>0$ and initial values $y_i^0,z_i^0,s_i^0$ for each node $i\in\mcf{N}$
		\State Set $k=0$ and $p_i^0=0$
		\Repeat
		\State Exchange $y_i^k$ with nodes in $\mcf{N}_i$
		\State $p_i^{k+1} \gets p_i^k + \rho \sum_{j\in\mcf{N}_i} (y_i^k - y_j^k)$
		\State $s_i^{k+1} \gets s_i^k + \sigma (y_i^k - z_i^k)$
		\ifpreprint
			\State $y_i^{k+1} \gets \argmin\limits_{y_i} \left\lbrace \varphi_i(y_i) + \innerprod{y_i}{p_i^{k+1} + s_i^{k+1}} + \frac{\sigma}{2} \norm{y_i - z_i^k}^2 + \rho \sum_{j\in\mcf{N}_i} \normbig{y_i - \frac{y_i^k + y_j^k}{2}}^2 \right\rbrace$ \label{alg:c-admm-sum:phi}
			\State $z_i^{k+1} \gets \argmin\limits_{z_i} \Big\lbrace \vartheta_i(z_i) - \innerprod{z_i}{s_i^{k+1}} + \frac{\sigma}{2} \norm{z_i - y_i^{k+1}}^2 \Big\rbrace$
		\else
			\State $y_i^{k+1} \gets \argmin\limits_{y_i} \Big\lbrace \varphi_i(y_i) + \innerprod{y_i}{p_i^{k+1} + s_i^{k+1}}$ \par\vspace{-.5em}
			$\!\!\!\!\!\! \phantom{y_i^{k+1} \gets \argmin\limits_{y_i} \Big\lbrace} + \frac{\sigma}{2} \norm{y_i - z_i^k}^2$ \par\vspace{-.5em}
			$\!\!\!\!\!\!\phantom{y_i^{k+1} \gets \argmin\limits_{y_i} \Big\lbrace} + \rho \sum_{j\in\mcf{N}_i} \normbig{y_i - \frac{y_i^k + y_j^k}{2}}^2 \Big\rbrace$ \label{alg:c-admm-sum:phi}
			\State $z_i^{k+1} \gets \argmin\limits_{z_i} \Big\lbrace \vartheta_i(z_i) - \innerprod{z_i}{s_i^{k+1}}$ \par\vspace{-.5em}
			$\!\!\!\!\!\!\phantom{z_i^{k+1} \gets \argmin\limits_{z_i} \Big\lbrace} + \frac{\sigma}{2} \norm{z_i - y_i^{k+1}}^2 \Big\rbrace$
		\fi
		\State $k \gets k+1$
		\Until{termination condition is satisfied}
	\end{algorithmic}
\end{algorithm}

Derivations of both algorithms can be found in~\cite{Mateos:2010},
\ifpreprint
	but we also include them here for the sake of completeness.
\else
	but we also include them in the longer version paper \cite{Banjac:2018:arxiv}.
\fi

\ifpreprint

	\subsection{Derivation of \Alg~\ref{alg:c-admm}}\label{app:c-admm}

	Since $(\mcf{N},\mcf{E})$ is a connected undirected graph, \eqref{eqn:primal} can be reformulated as
	\begin{equation*}
		\MinProblem{}{\displaystyle \sum_{i\in\mcf{N}} \psi_i(y_i)}{y_i=t_{ij}, \quad i\in\mcf{N}, \: j\in\mcf{N}_i, \\ & y_j=t_{ij}, \quad i\in\mcf{N}, \: j\in\mcf{N}_i.}
	\end{equation*}
	The augmented Lagrangian associated with the problem above has the form
	\begin{align*}
		\mcf{L}_\rho( y, t, (u, v) ) \eqdef \sum_{i\in\mcf{N}} \Big[ \psi_i(y_i) + \sum_{j\in\mcf{N}_i} \big( & \innerprod{u_{ij}}{y_i-t_{ij}}  + \tfrac{\rho}{2} \norm{y_i-t_{ij}}^2 + \\[-.75em]
		& \innerprod{v_{ij}}{y_j-t_{ij}} + \tfrac{\rho}{2} \norm{y_j-t_{ij}}^2 \big) \Big].
	\end{align*}
	\gls{ADMM} then consists of the following iterations \cite{Boyd:2011}:
	\begin{align}
		y_i^{k+1} &\gets \argmin_{y_i} \Big\lbrace \psi_i(y_i) + \sum_{j\in\mcf{N}_i} \left( \innerprod{y_i}{u_{ij}^k + v_{ji}^k} + \tfrac{\rho}{2} \norm{y_i - t_{ij}^k}^2 + \tfrac{\rho}{2} \norm{y_i - t_{ji}^k}^2 \right) \Big\rbrace \label{eqn:admm:yi} \\
		t_{ij}^{k+1} &\gets \argmin_{t_{ij}} \Big\lbrace -\innerprod{t_{ij}}{u_{ij}^k + v_{ij}^k} + \tfrac{\rho}{2} \norm{t_{ij}-y_i^{k+1}}^2 + \tfrac{\rho}{2} \norm{t_{ij}-y_j^{k+1}}^2 \Big\rbrace  \label{eqn:admm:tij} \\
		u_{ij}^{k+1} &\gets u_{ij}^k + \rho \left( y_i^{k+1} - t_{ij}^{k+1} \right) \label{eqn:admm:uij} \\
		v_{ij}^{k+1} &\gets v_{ij}^k + \rho \left( y_j^{k+1} - t_{ij}^{k+1} \right). \label{eqn:admm:vij}
	\end{align}
	The minimization problem in \eqref{eqn:admm:tij} has the following closed-form solution:
	\[
		t_{ij}^{k+1} = \half \left( y_i^{k+1} + y_j^{k+1} \right) + \tfrac{1}{2\rho} \left( u_{ij}^{k} + v_{ij}^{k} \right).
	\]
	Summing \eqref{eqn:admm:uij} and \eqref{eqn:admm:vij}, and plugging $t_{ij}^{k+1}$ from the equality above, we obtain
	\begin{equation}\label{eqn:app:uij_vij}
		u_{ij}^{k+1} + v_{ij}^{k+1} = 0,
	\end{equation}
	which then implies
	\begin{equation}\label{eqn:app:tij-update}
		t_{ij}^{k+1} = \half \left( y_i^{k+1} + y_j^{k+1} \right),
	\end{equation}
	and
	\begin{equation}\label{eqn:app:uij-update}
		u_{ij}^{k+1} = u_{ij}^k + \tfrac{\rho}{2} \left( y_i^{k+1} - y_j^{k+1} \right).
	\end{equation}
	Note from \eqref{eqn:app:tij-update} that if $t_{ij}^0=t_{ji}^0$, then $t_{ij}^k=t_{ji}^k$ for all $k\in\Nat$.
	Also, it follows from \eqref{eqn:app:uij_vij} and \eqref{eqn:app:uij-update} that if $u_{ij}^0 = v_{ij}^0 = 0$ and $u_{ij}^0 = u_{ji}^0 = 0$, then $u_{ij}^k = -v_{ij}^k$ and $u_{ij}^k = -u_{ji}^k$ for all $k\in\Nat$.
	Defining
	\[
		p_i^k \eqdef \sum_{j\in\mcf{N}_i} \left( u_{ij}^k - v_{ij}^k \right) = 2\sum_{j\in\mcf{N}_i} u_{ij}^k,
	\]
	we have
	\begin{equation}\label{eqn:app:pi}
		p_i^{k+1} \eqdef p_i^k + \rho \sum_{j\in\mcf{N}_i} \left( y_i^{k+1} - y_j^{k+1} \right).
	\end{equation}
	Finally, iterations \eqref{eqn:admm:yi}--\eqref{eqn:admm:vij} reduce to
	\begin{align*}
		y_i^{k+1} &\gets \argmin_{y_i} \Big\lbrace \psi_i(y_i) + \innerprod{y_i}{p_i^k} + \rho \sum_{j\in\mcf{N}_i} \norm{y_i - \tfrac{y_i^k+y_j^k}{2}}^2 \Big\rbrace \\
		p_i^{k+1} &\gets p_i^k + \rho \sum_{j\in\mcf{N}_i} \left( y_i^{k+1} - y_j^{k+1} \right).
	\end{align*}
	\Alg~\ref{alg:c-admm} is obtained by starting the iteration from the $p_i$-update.
	Note that summing \eqref{eqn:app:pi} over $i\in\mcf{N}$, we obtain
	\begin{equation}\label{eqn:app:pi-sum}
		\sum_{i\in\mcf{N}} p_i^{k+1} = \sum_{i\in\mcf{N}} p_i^k + \rho \sum_{i\in\mcf{N}} \sum_{j\in\mcf{N}_i} \left( y_i^{k+1} - y_j^{k+1} \right) = 0,
	\end{equation}
	where the second equality follows from $p_i^0=0$ and the symmetry in the double sum.

	\subsection{Derivation of \Alg~\ref{alg:c-admm-sum}}

	Problem \eqref{eqn:primal} in which $\psi_i = \varphi_i + \vartheta_i$ can be reformulated as
	\begin{equation*}
		\MinProblem{}{\displaystyle \sum_{i\in\mcf{N}} \big( \varphi_i(y_i) + \vartheta_i(z_i) \big)}{y_i=z_i, \quad \; i\in\mcf{N}, \\ & y_i=t_{ij}, \quad i\in\mcf{N}, \: j\in\mcf{N}_i, \\ & y_j=t_{ij}, \quad i\in\mcf{N}, \: j\in\mcf{N}_i.}
	\end{equation*}
	The augmented Lagrangian associated with the problem above has the form
	\begin{align*}
		\mcf{L}_{\sigma,\rho}( y, (z, t), (s, u, v) ) \eqdef \sum_{i\in\mcf{N}} \Big[ & \varphi_i(y_i) + \vartheta_i(z_i) + \innerprod{s_i}{y_i-z_i} + \tfrac{\sigma}{2} \norm{y_i-z_i}^2 + \\
		& \sum_{j\in\mcf{N}_i} \big( \innerprod{u_{ij}}{y_i-t_{ij}}  + \tfrac{\rho}{2} \norm{y_i-t_{ij}}^2 + \\
		&\phantom{{} \sum_{j\in\mcf{N}_i} \big( } \innerprod{v_{ij}}{y_j-t_{ij}} + \tfrac{\rho}{2} \norm{y_j-t_{ij}}^2 \big) \Big].
	\end{align*}
	\gls{ADMM} then consists of the following iterations:
	\begin{align*}
		y_i^{k+1} &\gets \argmin_{y_i} \Big\lbrace \varphi_i(y_i) + \innerprod{y_i}{s_i^k} + \tfrac{\sigma}{2} \norm{y_i-z_i^k}^2 \\
		&\phantom{\gets \argmin_{y_i} \Big\lbrace} + \sum_{j\in\mcf{N}_i} \left( \innerprod{y_i}{u_{ij}^k + v_{ji}^k} + \tfrac{\rho}{2} \norm{y_i - t_{ij}^k}^2 + \tfrac{\rho}{2} \norm{y_i - t_{ji}^k}^2 \right) \Big\rbrace \\
		z_i^{k+1} &\gets \argmin_{z_i} \Big\lbrace \vartheta_i(z_i) - \innerprod{z_i}{s_i^k} + \tfrac{\sigma}{2} \norm{z_i-y_i^{k+1}}^2 \Big\rbrace \\
		t_{ij}^{k+1} &\gets \argmin_{t_{ij}} \Big\lbrace -\innerprod{t_{ij}}{u_{ij}^k + v_{ij}^k} + \tfrac{\rho}{2} \norm{t_{ij}-y_i^{k+1}}^2 + \tfrac{\rho}{2} \norm{t_{ij}-y_j^{k+1}}^2 \Big\rbrace \\
		s_i^{k+1} &\gets s_i^k + \sigma \left( y_i^{k+1} - z_i^{k+1} \right) \\
		u_{ij}^{k+1} &\gets u_{ij}^k + \rho \left( y_i^{k+1} - t_{ij}^{k+1} \right) \\
		v_{ij}^{k+1} &\gets v_{ij}^k + \rho \left( y_j^{k+1} - t_{ij}^{k+1} \right).
	\end{align*}
	We can eliminate $t_{ij}$ and introduce a variable $p_i$ in a similar fashion as in Section~\ref{app:c-admm}.
	Iterations above then reduce to
	\begin{align*}
		y_i^{k+1} &\gets \argmin_{y_i} \Big\lbrace \varphi_i(y_i) + \innerprod{y_i}{s_i^k+p_i^k} + \tfrac{\sigma}{2} \norm{y_i-z_i^k}^2 + \rho \sum_{j\in\mcf{N}_i} \norm{y_i - \tfrac{y_i^k+y_j^k}{2}}^2 \Big\rbrace \\
		z_i^{k+1} &\gets \argmin_{z_i} \Big\lbrace \vartheta_i(z_i) - \innerprod{z_i}{s_i^k} + \tfrac{\sigma}{2} \norm{z_i-y_i^{k+1}}^2 \Big\rbrace \\
		s_i^{k+1} &\gets s_i^k + \sigma \left( y_i^{k+1} - z_i^{k+1} \right) \\
		p_i^{k+1} &\gets p_i^k + \rho \sum_{j\in\mcf{N}_i} \left( y_i^{k+1} - y_j^{k+1} \right).
	\end{align*}
	\Alg~\ref{alg:c-admm-sum} is obtained by replacing the order of $s_i$- and $p_i$-updates, and starting the iteration from the $p_i$-update.

\fi

\section{Supporting results}\label{app:supporting-results}

\begin{lemma}\label{lem:prox-conjugate}
	Let $g:\Re^n\mapsto\ReExt$ be a convex, closed, and proper function, $\mcf{C}$ a nonempty, closed, and convex cone, and $E\in\Re^{m\times n}$.
	Consider the following function:
	\[
		d(y) \eqdef g^*(-E^* y) + \mcf{I}_\mcf{C}(y),
	\]
	and suppose it is proper.
	Then the proximal operator of $d$ can be computed as
	\[
		\prox_d^\gamma(z) = \tfrac{1}{\gamma} \project{\mcf{C}}(Ex^\star + \gamma z),
	\]
	where $(x^\star,t^\star)$ is a minimizer of the following problem:
	\ifpreprint
		\[
			\underset{(x,t)}{\textrm{minimize}} \quad g(x) + \mcf{I}_{\polar{\mcf{C}}}(t) + \tfrac{1}{2\gamma} \norm{Ex + \gamma z - t}^2,
		\]
	\else
		\[
			\underset{(x,t)}{\textrm{\em minimize}} \quad g(x) + \mcf{I}_{\polar{\mcf{C}}}(t) + \tfrac{1}{2\gamma} \norm{Ex + \gamma z - t}^2,
		\]
	\fi
	which has at least one solution.
\end{lemma}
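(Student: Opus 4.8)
The plan is to recognize the $(x,t)$--minimization as the Fenchel--Rockafellar dual of the strongly convex program defining $\prox_d^\gamma(z)$, to obtain existence of $(x^\star,t^\star)$ from the duality theorem, and then to read the closed form off the optimality conditions of that program.

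I would first write $\Phi(y)\eqdef d(y)+\tfrac\gamma2\norm{y-z}^2=\varphi(y)+g^*(-E^*y)$ with $\varphi\eqdef\mcf{I}_\mcf{C}+\tfrac\gamma2\norm{\cdot-z}^2$. Since $d$ is proper, $\Phi$ is proper, closed and $\gamma$--strongly convex, hence has a unique minimizer, namely $\prox_d^\gamma(z)$. A short computation of $\varphi^*$ — a square completion, simplified by the Moreau decomposition \cite[\Thm~6.30]{Bauschke:2011} and the positive homogeneity of $\project{\mcf{C}}$ (valid since $\mcf{C}$ is a cone) — gives
\[
\varphi^*(w)=\tfrac1{2\gamma}\norm{\project{\mcf{C}}(w+\gamma z)}^2-\tfrac\gamma2\norm{z}^2=\min_t\Big\{\mcf{I}_{\polar{\mcf{C}}}(t)+\tfrac1{2\gamma}\norm{w+\gamma z-t}^2\Big\}-\tfrac\gamma2\norm{z}^2.
\]
Thus the Fenchel--Rockafellar dual $\min_x\{g(x)+\varphi^*(Ex)\}$ of $\min_y\Phi(y)$ is, upon reinstating the inner variable $t$, exactly the displayed $(x,t)$--problem minus the constant $\tfrac\gamma2\norm{z}^2$. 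By Fenchel--Rockafellar duality \cite{Bauschke:2011} (whose hypotheses are addressed below) the duality gap vanishes and the $(x,t)$--problem has a minimizer $(x^\star,t^\star)$.

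Fixing such a minimizer, I would set $u\eqdef Ex^\star+\gamma z$ and $y^\star\eqdef\tfrac1\gamma\project{\mcf{C}}(u)$. Partial minimality in $t$ gives $t^\star=\project{\polar{\mcf{C}}}(u)$, whence $u=\project{\mcf{C}}(u)+\project{\polar{\mcf{C}}}(u)=\gamma y^\star+t^\star$ by the Moreau decomposition. Partial minimality in $x$ gives $0\in\partial g(x^\star)+\tfrac1\gamma E^*(u-t^\star)=\partial g(x^\star)+E^*y^\star$, \ie $-E^*y^\star\in\partial g(x^\star)$, equivalently $x^\star\in\partial g^*(-E^*y^\star)$ by the conjugate subgradient relation. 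Moreover the variational characterization of the projection gives $t^\star=u-\project{\mcf{C}}(u)\in\normalCone{\mcf{C}}\big(\project{\mcf{C}}(u)\big)=\normalCone{\mcf{C}}(\gamma y^\star)=\normalCone{\mcf{C}}(y^\star)$, the last equality because $\mcf{C}$ is a cone and $\gamma>0$. Combining these, and recalling that $\normalCone{\mcf{C}}=\partial\mcf{I}_\mcf{C}$,
\[
\gamma(z-y^\star)=u-Ex^\star-\gamma y^\star=t^\star-Ex^\star\in -E\,\partial g^*(-E^*y^\star)+\normalCone{\mcf{C}}(y^\star)\subseteq\partial d(y^\star),
\]
where the last inclusion uses only the always-valid weak chain and sum rules for subdifferentials. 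Hence $0\in\gamma(y^\star-z)+\partial d(y^\star)=\partial\Phi(y^\star)$, so $y^\star$ minimizes the strongly convex $\Phi$; as that minimizer is unique and equals $\prox_d^\gamma(z)$, we get $\prox_d^\gamma(z)=y^\star=\tfrac1\gamma\project{\mcf{C}}(Ex^\star+\gamma z)$, the claimed identity.

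The only genuinely delicate point, I expect, is the attainment of $(x^\star,t^\star)$ — that the Fenchel--Rockafellar dual value is actually attained — which needs a relative-interior-type constraint qualification on the pair $\big(\mcf{I}_\mcf{C}+\tfrac\gamma2\norm{\cdot-z}^2,\ g^*\circ(-E^*)\big)$, something slightly stronger than the bare properness of $d$; I would settle it by verifying the precise hypothesis of the duality theorem invoked, or else state it as an explicit assumption. The closed-form identity itself is robust: once a minimizer is available, the remaining argument uses only set inclusions in the subdifferential calculus together with the Moreau decomposition, so no qualification is needed there.
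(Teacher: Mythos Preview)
Your argument is correct and complete for the closed-form identity; the route, however, differs from the paper's. The paper does not set up Fenchel--Rockafellar duality explicitly: it invokes \cite[\Prop~19.5]{Bauschke:2011} as a black box to characterize the minimizer of $\mcf{I}_\mcf{C}(y)+\tfrac1\gamma g^*(-E^*y)+\tfrac12\norm{y-z}^2$ as $y^\star=\project{\mcf{C}}(z+Ep^\star)$ for some auxiliary minimizer $p^\star$, then massages the $p$--problem via the Moreau decomposition (turning $\tfrac12\norm{\cdot}^2-\tfrac12\dist{\mcf{C}}^2$ into $\tfrac12\dist{\polar{\mcf{C}}}^2$) and the change of variable $x=\gamma p$ into the stated $(x,t)$--problem. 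Your approach instead computes $\varphi^*$ directly, identifies the $(x,t)$--problem as the Fenchel dual, and then verifies by hand---using only the weak chain and sum rules together with the Moreau decomposition---that $\tfrac1\gamma\project{\mcf{C}}(Ex^\star+\gamma z)$ satisfies the first-order condition of the strongly convex primal. What your approach buys is transparency: one sees exactly which inclusions are needed, and the identity itself requires no qualification. What the paper's approach buys is that the existence of $p^\star$ (hence of $(x^\star,t^\star)$) is delegated to the cited proposition, so the delicate point you flag about dual attainment is absorbed into that reference rather than left as a side condition.
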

\begin{proof}
	From the definition of $d$, $\prox_d^\gamma(z)$ can be computed as the minimizer of the following problem:
	\begin{equation}\label{eqn:lem:min-y}
		\underset{y}{\textrm{minimize}} \quad \mcf{I}_\mcf{C}(y) + \tfrac{1}{\gamma} g^*(-E^* y) + \tfrac{1}{2}\norm{y-z}^2.
	\end{equation}
	Due to \cite[Prop.~19.5]{Bauschke:2011}, a solution to the problem above can be characterized as
	\[
		y^\star = \project{\mcf{C}}(z+Ep^\star),
	\]
	where
	\[\arraycolsep=0.5ex
	\begin{array}{ll}
		p^\star \in \underset{p}{\textrm{argmin}} & \Big\lbrace \tfrac{1}{2}\norm{z+Ep}^2 - \underset{s\in\mcf{C}}{\rm min} \left\lbrace \tfrac{1}{2}\norm{s-(z+Ep)}^2 \right\rbrace
		\ifpreprint
			+ \tfrac{1}{\gamma}g(\gamma p) \Big\rbrace \\
		\else
			\\ &\phantom{\Big\lbrace} + \tfrac{1}{\gamma}g(\gamma p) \Big\rbrace \\
		\fi
		\phantom{p^\star} = \underset{p}{\textrm{argmin}} & \left\lbrace \tfrac{1}{2}\norm{z+Ep}^2 - \tfrac{1}{2}\dist{\mcf{C}}^2(z+Ep) + \tfrac{1}{\gamma}g(\gamma p) \right\rbrace \\
		\phantom{p^\star} = \underset{p}{\textrm{argmin}} & \left\lbrace \tfrac{1}{2}\dist{\polar{\mcf{C}}}^2(z+Ep) + \tfrac{1}{\gamma}g(\gamma p) \right\rbrace \\
		\phantom{p^\star} = \underset{p}{\textrm{argmin}} & \left\lbrace \tfrac{\gamma}{2}\dist{\polar{\mcf{C}}}^2(z+Ep) + g(\gamma p) \right\rbrace,
	\end{array}
	\]
	where we used the Moreau decomposition \cite[\Thm~6.30]{Bauschke:2011} in the second equality.
	Introducing the variable $x=\gamma p$, we can write
	\begin{align*}
		y^\star &= \tfrac{1}{\gamma}\project{\mcf{C}}(Ex^\star + \gamma z) \\
		x^\star &\in \underset{x}{\textrm{argmin}} \left\lbrace g(x) + \tfrac{1}{2\gamma}\dist{\polar{\mcf{C}}}^2(Ex + \gamma z) \right\rbrace.
	\end{align*}
	Note that, since the minimization in \eqref{eqn:lem:min-y} involves a strongly convex function, $y^\star$ is unique even when $x^\star$ is not.

	Finally, the minimization over $x$ can be written as
	\begin{equation*}
		\MinProblem{(x,t)}{g(x) + \tfrac{1}{2\gamma} \norm{Ex + \gamma z -t}^2}{t \in \polar{\mcf{C}}.}
	\end{equation*}
	This concludes the proof.
\end{proof}

\ifpreprint

	\begin{lemma}
		The first-order optimality conditions for \ref{eqn:main} are given by
		\begin{subequations}\label{eqn:optimal-cond}
			\begin{align}
				w &\in \mcf{K} \label{eqn:optimal-cond:w} \\
				0 &\in \partial f (x_i) + A_i^T y, \quad \forall i \in \mcf{N} \label{eqn:optimal-cond:stat} \\
				y &\in \normalCone{\mcf{K}}(w) \label{eqn:optimal-cond:ncone} \\
				0 &= \sum_{i\in\mcf{N}} (A_i x_i - b_i) - w. \label{eqn:optimal-cond:equal}
			\end{align}
		\end{subequations}
	\end{lemma}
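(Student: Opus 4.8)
The plan is to recognize the listed relations as the Karush--Kuhn--Tucker (saddle-point) conditions of the equality-constrained reformulation of \ref{eqn:main} that was used to derive \eqref{eqn:dual}, and then to use convexity together with Assumption~\ref{ass:problem}(iii) to turn them into a characterization of optimality. Recall that \ref{eqn:main} was rewritten as the problem of minimizing $\sum_{i\in\mcf{N}} f_i(x_i) + \mcf{I}_\mcf{K}(w)$ over $(x,w)$ subject to $\sum_{i\in\mcf{N}}(A_i x_i - b_i) = w$, with Lagrangian $\mcf{L}(x,w,y)$ as in \eqref{eqn:main:Lagrangian}; a point $x$ solves \ref{eqn:main} iff $(x,w)$ with $w\eqdef\sum_{i\in\mcf{N}}(A_i x_i-b_i)$ solves this reformulation. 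Under Assumption~\ref{ass:problem}(i)--(ii), $(x,w)\mapsto\mcf{L}(x,w,y)$ is convex and $y\mapsto\mcf{L}(x,w,y)$ is affine, and Assumption~\ref{ass:problem}(iii) supplies a primal--dual solution with zero duality gap. Hence, by a standard strong-duality/saddle-point argument (see \eg \cite{Bauschke:2011,Boyd:2011}), $(x,w)$ is optimal for the reformulation iff there exists $y$ for which $(x,w,y)$ is a saddle point of $\mcf{L}$, \ie $\mcf{L}(\cdot,\cdot,y)$ is minimized at $(x,w)$ and $\mcf{L}(x,w,\cdot)$ is maximized at $y$.

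It then remains to unpack the two saddle-point conditions and match them to \eqref{eqn:optimal-cond}. The map $y'\mapsto\mcf{L}(x,w,y')$ equals a constant plus $\innerprod{y'}{\sum_{i\in\mcf{N}}(A_i x_i - b_i) - w}$, so it is maximized over $y'$ (in fact constant in $y'$) exactly when the linear term vanishes, which gives \eqref{eqn:optimal-cond:equal}; moreover finiteness of $\mcf{L}(x,w,y)$ requires $\mcf{I}_\mcf{K}(w)<+\infty$, \ie $w\in\mcf{K}$, which is \eqref{eqn:optimal-cond:w}. For the minimization, note that $(x,w)\mapsto\mcf{L}(x,w,y)$ decomposes, up to an additive constant, as $\mcf{I}_\mcf{K}(w) + \sum_{i\in\mcf{N}}\bigl(f_i(x_i) + \innerprod{y}{A_i x_i}\bigr)$, which is separable across $w$ and the blocks $x_i$. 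Fermat's rule, the blockwise structure of the subdifferential of a separable sum, and the sum rule (no constraint qualification is needed since $x_i\mapsto\innerprod{y}{A_i x_i}$ is finite-valued and differentiable, with gradient $A_i^* y = A_i^T y$) then yield $0\in\partial f_i(x_i) + A_i^T y$ for each $i\in\mcf{N}$, which is \eqref{eqn:optimal-cond:stat}, and $0\in\partial\mcf{I}_\mcf{K}(w) - y$; since $\partial\mcf{I}_\mcf{K}(w)=\normalCone{\mcf{K}}(w)$ (as noted in the Notation section), the latter reads $y\in\normalCone{\mcf{K}}(w)$, which is \eqref{eqn:optimal-cond:ncone}. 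All of these implications reverse, so \eqref{eqn:optimal-cond} is equivalent to $(x,w,y)$ being a saddle point of $\mcf{L}$; combining with the first paragraph completes the proof.

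The step I expect to be the main obstacle is the equivalence asserted in the first paragraph -- that optimality of \ref{eqn:main} is equivalent to the existence of a multiplier $y$ making $(x,w,y)$ a saddle point of $\mcf{L}$. This is precisely where Assumption~\ref{ass:problem}(iii) is essential: without the standing assumption of a zero duality gap and an existing primal--dual pair, one would instead have to impose a constraint qualification (\eg a Slater-type condition) on the conic constraint in order to guarantee a multiplier. Everything afterwards is routine convex subdifferential calculus; the only minor care needed is in splitting the subdifferential of $\mcf{L}(\cdot,\cdot,y)$ across $w$ and the $x_i$, and in the (elementary) sum rule for adding the linear term $\innerprod{y}{A_i x_i}$ to $f_i$.
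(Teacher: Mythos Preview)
Your proposal is correct and follows essentially the same approach as the paper: both derive \eqref{eqn:optimal-cond} from the saddle-point characterization of the Lagrangian \eqref{eqn:main:Lagrangian}, computing $\partial_{x_i}\mcf{L}$, $\partial_w\mcf{L}$, and $\nabla_y\mcf{L}$ and identifying $\partial\mcf{I}_\mcf{K}(w)=\normalCone{\mcf{K}}(w)$. The paper's version is terser (it simply cites \cite[\Thm~11.50]{Rockafellar:1998} for the saddle-point formulation), whereas you spell out the role of Assumption~\ref{ass:problem}(iii) and the separability/sum-rule details explicitly, but the substance is the same.
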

	\begin{proof}
		A primal-dual solution to \ref{eqn:main} can be characterized via a saddle point of its Lagrangian given by \eqref{eqn:main:Lagrangian}.
		Therefore, the first-order optimality conditions can be written as \cite[\Thm~11.50]{Rockafellar:1998}
		\begin{align*}
			w &\in \mcf{K} \\
			0 &\in \partial_{x_i} \mcf{L}( x, w, y) = \partial f_i(x_i) + A_i^T y, \quad \forall i \in \mcf{N} \\
			0 &\in \partial_{w} \mcf{L}( x, w, y) = \partial \mcf{I}_\mcf{K}(w) - y \\
			0 &= \nabla_{y} \mcf{L}( x, w, y) = \sum_{i\in\mcf{N}} (A_i x_i - b_i) - w,
		\end{align*}
		where the third inclusion is equivalent to $y\in\normalCone{\mcf{K}}(w)$.
	\end{proof}

	\begin{lemma}\label{lem:normal-cone}
		Let $\mcf{C}$ be a nonempty, closed, and convex cone, and suppose that $y\in\normalCone{\mcf{C}}(t_i)$ for $i=1,\ldots,m$.
		Then $y\in\normalCone{\mcf{C}}(\sum_{i=1}^m t_i)$.
	\end{lemma}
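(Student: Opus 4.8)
The plan is to reduce everything to the standard characterization of the normal cone of a convex cone: for $t\in\mcf{C}$ one has $u\in\normalCone{\mcf{C}}(t)$ if and only if $u\in\polar{\mcf{C}}$ and $\innerprod{u}{t}=0$. Once this is in hand, the claim becomes an elementary inner-product computation, so I would either invoke this fact or derive it inline in one short paragraph.

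First I would unpack the hypothesis. Fix $i$; then $y\in\normalCone{\mcf{C}}(t_i)$ means $\innerprod{y}{s-t_i}\le 0$ for every $s\in\mcf{C}$. Exploiting that $\mcf{C}$ is a cone, take $s=0\in\mcf{C}$ to obtain $\innerprod{y}{t_i}\ge 0$, and take $s=2t_i\in\mcf{C}$ to obtain $\innerprod{y}{t_i}\le 0$; hence $\innerprod{y}{t_i}=0$. Substituting this back, $\innerprod{y}{s}\le 0$ for all $s\in\mcf{C}$, i.e.\ $y\in\polar{\mcf{C}}$. Doing this for each $i=1,\ldots,m$ gives $y\in\polar{\mcf{C}}$ together with $\innerprod{y}{t_i}=0$ for all $i$.

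Next I would note that a convex cone is closed under finite sums: by convexity $\tfrac12\sum_{i=1}^m t_i\in\mcf{C}$, and scaling by $m$ (again using that $\mcf{C}$ is a cone) yields $t\eqdef\sum_{i=1}^m t_i\in\mcf{C}$, so the conclusion is at least well-posed. Moreover $\innerprod{y}{t}=\sum_{i=1}^m\innerprod{y}{t_i}=0$. Combining this with $y\in\polar{\mcf{C}}$, for any $s\in\mcf{C}$ we get $\innerprod{y}{s-t}=\innerprod{y}{s}-0\le 0$, so $\sup_{s\in\mcf{C}}\innerprod{y}{s-t}\le 0$, which is exactly $y\in\normalCone{\mcf{C}}(t)$.

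There is no substantive obstacle; the only thing to handle carefully is the cone arithmetic — using $0\in\mcf{C}$ and $2t_i\in\mcf{C}$ to pin down $\innerprod{y}{t_i}=0$, and the closure of a convex cone under finite addition so that $\sum_i t_i\in\mcf{C}$. Everything else is a single linear manipulation of the inner product.
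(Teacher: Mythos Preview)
Your argument is correct, apart from a cosmetic slip in the cone-closure step: convexity gives $\tfrac{1}{m}\sum_{i=1}^m t_i\in\mcf{C}$ (not $\tfrac12\sum_i t_i$), and scaling by $m$ then yields $\sum_i t_i\in\mcf{C}$; alternatively just use $\mcf{C}+\mcf{C}=\mcf{C}$ and induct.

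Your route is a little different from the paper's. The paper handles $m=2$ by summing the two inequalities $\sup_{s\in\mcf{C}}\innerprod{y}{s-t_i}\le 0$ directly and then invoking the identity $\mcf{C}+\mcf{C}=\mcf{C}$ to collapse the resulting double supremum over $\mcf{C}\times\mcf{C}$ into a single supremum over $\mcf{C}$; induction does the rest. You instead pass through the characterization $\normalCone{\mcf{C}}(t)=\polar{\mcf{C}}\cap\{t\}^\perp$, extracting $y\in\polar{\mcf{C}}$ and $\innerprod{y}{t_i}=0$ from each hypothesis and then re-verifying these two conditions at $\sum_i t_i$. Both proofs are one paragraph; the paper's is a touch more direct, while yours makes the polarity-plus-complementarity structure explicit.
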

	\begin{proof}
		We show below that the result holds for $m=2$.
		The general result then holds by induction.

		Inclusions $y\in\normalCone{\mcf{C}}(t_1)$ and $y\in\normalCone{\mcf{C}}(t_2)$ are equivalent to
		\begin{equation*}
			0 \ge \sup_{t_1'\in\mcf{C}}\innerprod{y}{t_1'-t_1}
			\quad\text{and}\quad
			0 \ge \sup_{t_2'\in\mcf{C}}\innerprod{y}{t_2'-t_2}.
		\end{equation*}
		Summing the inequalities above, we obtain
		\[
			0 \ge \sup_{\substack{t_1'\in\mcf{C} \\ t_2'\in\mcf{C}}} \innerprod{y}{(t_1'+t_2')-(t_1+t_2)}.
		\]
		Since $\mcf{C}$ is a convex cone, we have $\mcf{C}+\mcf{C}=\mcf{C}$, and the inequality reduces to
		\[
			0 \ge \sup_{t'\in\mcf{C}} \innerprod{y}{t'-(t_1+t_2)},
		\]
		or equivalently, $y\in\normalCone{\mcf{C}}(t_1+t_2)$.
	\end{proof}

	\section{Proof of \Prop~\ref{prop:adc-admm-cvg}}\label{app:prop:adc-admm-cvg}

	Since \Alg~\ref{alg:adc-admm} is a direct application of \Alg~\ref{alg:c-admm} to \ref{eqn:dual}, it follows from \cite[\Prop~2]{Mateos:2010} that
	\[
		y_i^k \to y^\star, \quad \forall i \in \mcf{N},
	\]
	where $y^\star$ is a maximizer of \ref{eqn:dual}.
	We show in the sequel that the iterates $(x_i^k)_{i\in\mcf{N}}$, $w^k \eqdef \sum_{i\in\mcf{N}} t_i^k$, and $y_i^k$ satisfy optimality conditions~\eqref{eqn:optimal-cond} in the limit.

	Since $(x_i^{k+1},t_i^{k+1})$ is a minimizer of the optimization problem in step~\ref{alg:adc-admm:x} of \Alg~\ref{alg:adc-admm}, it satisfies the following optimality conditions:
	\begin{align*}
		0 &\in \partial f_i(x_i^{k+1}) + \tfrac{1}{2\rho d_i} A_i^T \left( A_i x_i^{k+1} + r_i^{k+1} - t_i^{k+1} \right) \\
		t_i^{k+1} &= \project{\mcf{K}}( A_i x_i^{k+1} + r_i^{k+1} ),
	\end{align*}
	and thus we can write the inclusion above as
	\begin{align*}
		\ifpreprint
			0 &\in \partial f_i(x_i^{k+1}) + \tfrac{1}{2\rho d_i} A_i^T \left( A_i x_i^{k+1} + r_i^{k+1} - \project{\mcf{K}}( A_i x_i^{k+1} + r_i^{k+1} ) \right) \\
		\else
			0 &\in \partial f_i(x_i^{k+1}) + \tfrac{1}{2\rho d_i} A_i^T \big( A_i x_i^{k+1} + r_i^{k+1} \\
			&\phantom{\in \partial f_i(x_i^{k+1}) + \tfrac{1}{2\rho d_i} A_i^T \big(} - \project{\mcf{K}}( A_i x_i^{k+1} + r_i^{k+1} ) \big) \\
		\fi
		&= \partial f_i(x_i^{k+1}) + \tfrac{1}{2\rho d_i} A_i^T \project{\polar{\mcf{K}}}( A_i x_i^{k+1} + r_i^{k+1} ) \\
		&= \partial f_i(x_i^{k+1}) + A_i^T y_i^{k+1},
	\end{align*}
	where the first equality follows from the Moreau decomposition \cite[\Thm~6.30]{Bauschke:2011}, and the second from step~\ref{alg:adc-admm:y} of \Alg~\ref{alg:adc-admm}.
	From the definition of $w^k$, we have
	\begin{align*}
		w^{k+1} = \sum_{i\in\mcf{N}} t_i^{k+1} = \sum_{i\in\mcf{N}} \project{\mcf{K}}( A_i x_i^{k+1} + r_i^{k+1} ) \in \mcf{K},
	\end{align*}
	which means that \eqref{eqn:optimal-cond:w} and \eqref{eqn:optimal-cond:stat} are satisfied in each iteration $k$ by construction.

	Using the Moreau decomposition again, we have
	\ifpreprint
		\begin{align*}
			t_i^{k+1} &= \project{\mcf{K}}( A_i x_i^{k+1} + r_i^{k+1} ) \\
			&= A_i x_i^{k+1} + r_i^{k+1} - \project{\polar{\mcf{K}}}( A_i x_i^{k+1} + r_i^{k+1} ) \\
			&= A_i x_i^{k+1} - b_i + \rho \sum_{j\in\mcf{N}_i} (y_i^k+y_j^k) - p_i^{k+1} - 2 \rho d_i y_i^{k+1},
		\end{align*}
	\else
		\begin{align*}
			t_i^{k+1} &= \project{\mcf{K}}( A_i x_i^{k+1} + r_i^{k+1} ) \\
			&= A_i x_i^{k+1} + r_i^{k+1} - \project{\polar{\mcf{K}}}( A_i x_i^{k+1} + r_i^{k+1} ) \\
			&= A_i x_i^{k+1} - b_i \!+\! \rho \sum_{j\in\mcf{N}_i} (y_i^k \!+\! y_j^k) - p_i^{k+1} \!-\! 2 \rho d_i y_i^{k+1},
		\end{align*}
	\fi
	and therefore
	\ifpreprint
		\begin{align*}
			A_i x_i^{k+1} - b_i - t_i^{k+1} &= p_i^{k+1} + 2 \rho d_i y_i^{k+1} \!-\! \rho \sum_{j\in\mcf{N}_i} (y_i^k+y_j^k) \\
			&= p_i^{k+1} + 2 \rho d_i (y_i^{k+1}-y_i^k) + \rho \sum_{j\in\mcf{N}_i} (y_i^k - y_j^k).
		\end{align*}
	\else
		\begin{align*}
			A_i x_i^{k+1} - b_i - t_i^{k+1} &= p_i^{k+1} + 2 \rho d_i y_i^{k+1} \!-\! \rho \sum_{j\in\mcf{N}_i} (y_i^k+y_j^k) \\
			&= p_i^{k+1} + 2 \rho d_i (y_i^{k+1}-y_i^k) \\
			&\phantom{=} + \rho \sum_{j\in\mcf{N}_i} (y_i^k - y_j^k).
		\end{align*}
	\fi
	Summing the equality above for all $i\in\mcf{N}$ and using \eqref{eqn:app:pi-sum}, we obtain
	\[
		\sum_{i\in\mcf{N}} (A_i x_i^{k+1} - b_i) - w^{k+1} = 2 \rho d_i \sum_{i\in\mcf{N}} (y_i^{k+1} - y_i^k) + \rho \sum_{i\in\mcf{N}} \sum_{j\in\mcf{N}_i} (y_i^k - y_j^k) \to 0.
	\]
	We can characterize $y_i^{k+1}$ as
	\[
		y_i^{k+1} = \tfrac{1}{2\rho d_i} \left( ( A_i x_i^{k+1} + r_i^{k+1} ) - t_i^{k+1} \right) \in \normalCone{\mcf{K}}(t_i^{k+1}),
	\]
	where the inclusion follows from \cite[\Prop~6.47]{Bauschke:2011}, and thus
	\[
		y^\star = \lim_{k\to\infty} y_i^k \in \normalCone{\mcf{K}} \big( \lim_{k\to\infty} t_i^k \big).
	\]
	Due to Lemma~\ref{lem:normal-cone}, we have
	\[
		y^\star \in \normalCone{\mcf{K}} \Big( \lim_{k\to\infty} \sum_{i\in\mcf{N}} t_i^k \Big) = \normalCone{\mcf{K}} \big( \lim_{k\to\infty} w^{k} \big).
	\]
	This concludes the proof.

	\section{Proof of \Prop~\ref{prop:ddc-admm-cvg}}\label{app:prop:ddc-admm-cvg}

	Since \Alg~\ref{alg:ddc-admm} is a direct application of \Alg~\ref{alg:c-admm-sum} to \ref{eqn:dual}, it follows from \cite[\Prop~4]{Mateos:2010} and \cite[\Cor~28.3]{Bauschke:2011} that
	\[
		y_i^k \to y^\star \quad\text{and}\quad z_i^k \to y^\star, \quad \forall i \in \mcf{N},
	\]
	where $y^\star$ is a maximizer of \ref{eqn:dual}.
	We show in the sequel that the iterates  $(x_i^k)_{i\in\mcf{N}}$, $w^k \eqdef \sum_{i\in\mcf{N}} s_i^k$, and $y_i^k$ satisfy optimality conditions~\eqref{eqn:optimal-cond} in the limit.

	Since $x_i^{k+1}$ is a minimizer of the optimization problem in step~\ref{alg:ddc-admm:x} of \Alg~\ref{alg:ddc-admm}, it satisfies the following condition:
	\begin{align*}
		0 &\in \partial f_i(x_i^{k+1}) + \tfrac{1}{\sigma+2\rho d_i} A_i^T \left( A_i x_i^{k+1} + r_i^{k+1} \right) \\
		&= \partial f_i(x_i^{k+1}) +  A_i^T y_i^{k+1},
	\end{align*}
	which means that \eqref{eqn:optimal-cond:stat} is satisfied in each iteration $k$ by construction.
	From step~\ref{alg:ddc-admm:y} of \Alg~\ref{alg:ddc-admm}, we have
	\begin{align*}
		y_i^{k+1} &= \tfrac{1}{\sigma + 2 \rho d_i} ( A_i x_i^{k+1} + r_i^{k+1} ) \\
		&= \tfrac{1}{\sigma + 2 \rho d_i} \big( A_i x_i^{k+1} - b_i - p_i^{k+1} - s_i^{k+1} + \sigma z_i^k + \rho \sum_{j\in\mcf{N}_i} (y_i^k+y_j^k) \big),
	\end{align*}
	and therefore
	\begin{align*}
		A_i x_i^{k+1} - b_i - s_i^{k+1} &= p_i^{k+1} + (\sigma + 2 \rho d_i) y_i^{k+1} - \sigma z_i^k - \rho \sum_{j\in\mcf{N}_i} (y_i^k+y_j^k) \\
		&= p_i^{k+1} + \sigma (y_i^{k+1}-z_i^k) + \rho \sum_{j\in\mcf{N}_i} (2y_i^{k+1} - y_i^k - y_j^k).
	\end{align*}
	Summing the equality above for all $i\in\mcf{N}$ and using \eqref{eqn:app:pi-sum}, we obtain
	\[
		\sum_{i\in\mcf{N}} (A_i x_i^{k+1} - b_i) - w^{k+1} = \sigma \sum_{i\in\mcf{N}} (y_i^{k+1}-z_i^k) + \rho \sum_{i\in\mcf{N}} \sum_{j\in\mcf{N}_i} (2y_i^{k+1} - y_i^k - y_j^k) \to 0.
	\]
	Using the Moreau decomposition in step~\ref{alg:ddc-admm:z} of \Alg~\ref{alg:ddc-admm}, we get
	\begin{equation}\label{eqn:app:prop:ddc-admm-cvg:sigma_zi}
		z_i^{k+1} = \tfrac{1}{\sigma} \project{\polar{\mcf{K}}} \left( s_i^{k+1} + \sigma y_i^{k+1} \right) = \tfrac{1}{\sigma} \left[ \left( s_i^{k+1} + \sigma y_i^{k+1} \right) - \project{\mcf{K}} \left( s_i^{k+1} + \sigma y_i^{k+1} \right) \right],
	\end{equation}
	and thus
	\[
		s_i^{k+1} = \project{\mcf{K}} \left( s_i^{k+1} + \sigma y_i^{k+1} \right) + \sigma (z_i^{k+1}-y_i^{k+1}).
	\]
	From the definition of $w^k$, we obtain
	\[
		\lim_{k\to\infty} w^{k} = \lim_{k\to\infty} \sum_{i\in\mcf{N}} s_i^k = \sum_{i\in\mcf{N}} \project{\mcf{K}} \big( \lim_{k\to\infty} s_i^k + \sigma y^\star \big) \in \mcf{K}.
	\]
	Finally, from \eqref{eqn:app:prop:ddc-admm-cvg:sigma_zi} and \cite[\Prop~6.47]{Bauschke:2011}, we have
	\[
		z_i^{k+1} \in \normalCone{\mcf{K}}\left( \project{\mcf{K}} ( s_i^{k+1} + \sigma y_i^{k+1} ) \right) = \normalCone{\mcf{K}}\left( s_i^{k+1} + \sigma (y_i^{k+1}-z_i^{k+1}) \right).
	\]
	Taking the limit of the inclusion above, we get
	\[
		y^\star \in \normalCone{\mcf{K}}\big( \lim_{k\to\infty} s_i^k \big),
	\]
	and due to Lemma~\ref{lem:normal-cone}, we obtain
	\[
		y^\star \in \normalCone{\mcf{K}} \Big( \lim_{k\to\infty} \sum_{i\in\mcf{N}} s_i^k \Big) = \normalCone{\mcf{K}} \big( \lim_{k\to\infty} w^{k} \big).
	\]
	This concludes the proof.
\else
	\balance
\fi

\bibliography{refs}

\end{document}